\documentclass[11pt]{article}

\usepackage{epsfig}
\usepackage{epstopdf}
\usepackage[titletoc,title]{appendix}
\usepackage{graphicx}
\usepackage{latexsym, cite, bm, amsmath}
\usepackage{contour,soul,color}
\usepackage{booktabs,threeparttable} 
\usepackage[rotateright]{rotating} 
\usepackage{tabularx,multirow} 
\usepackage{cases}
\usepackage{float}
\usepackage{mathtools}
\usepackage{caption}
\usepackage{diagbox}
\usepackage{lscape}
\usepackage{amsfonts}
\usepackage{amsmath,url,cases}
\usepackage{amsthm}
\usepackage{indentfirst}

\usepackage{makecell}
\usepackage[english]{babel}
\usepackage[latin1]{inputenc}
\usepackage{times}
\usepackage{bbding}
\usepackage{latexsym}
\usepackage{epic}
\usepackage{indentfirst}
\usepackage{hyperref}
\usepackage{subfigure}
\usepackage{fancyvrb}
\usepackage{amssymb}
\usepackage{url}
\usepackage{framed}
\graphicspath{{figure/}}
\usepackage[boxed,ruled,commentsnumbered,titlenotnumbered]{algorithm2e}

\usepackage[left=1.10in,top=1.10in,right=1.10in,bottom=1.10in]{geometry}

\numberwithin{equation}{section}

\newtheorem{theorem}{Theorem}[section]

\newtheorem{lemma}[theorem]{Lemma}

\newtheorem{assumption}[theorem]{Assumption}
\newtheorem{remark}[theorem]{Remark}

\newtheorem{condition}{Condition}[section]

\numberwithin{equation}{section}

\def\be{\begin{equation}}
\def\ee{\end{equation}}

\def\la{\lambda}
\def \[{\begin{equation}}
\def \]{\end{equation}}

\def\R{{\mathbb R}}

\def\L{{\cal L}}

\def\nn{\nonumber}
\def\Tsf{\mathsf{T}}

\begin{document}

\title{ Alternating Direction Method of Multipliers with Variable Metric Indefinite Proximal Terms for Convex Optimization
}

\author{Yan Gu$^*$
and
Nobuo Yamashita$^*$
\\[0.2cm]
\small{$^*$Graduate School of Informatics, Kyoto University, Kyoto 6068501, Japan.}\\
\small{Email: \texttt{yan@amp.i.kyoto-u.ac.jp; nobuo@i.kyoto-u.ac.jp.}}\\
}
\date{\today}
\maketitle

\begin{abstract}
This paper studies a proximal alternating direction method of multipliers (ADMM) with variable metric indefinite proximal terms for linearly constrained convex optimization problems.
The proximal ADMM plays an important role in many application areas, since the subproblems of the method are easy to solve. Recently, it is reported that the proximal ADMM with a certain fixed indefinite proximal term is faster than that with a positive semidefinite term, and still has the global convergence property.
On the other hand, Gu and Yamashita studied a variable metric semidefinite proximal ADMM whose proximal term is generated by the BFGS update. They reported that a slightly indefinite matrix also makes the algorithm work well in their numerical experiments.
Motivated by this fact, we consider a variable metric indefinite proximal ADMM, and give sufficient conditions on the proximal terms for the global convergence.
Moreover, we propose a new indefinite proximal term based on the BFGS update which can satisfy the conditions for the global convergence.
\end{abstract}

\textbf{Keywords:}
alternating direction method of multipliers, variable metric indefinite proximal term, BFGS update, global convergence, convex optimization


\section{Introduction}\label{into}
We consider the following convex composite optimization problem:
\[\label{cp}
 \min \,  \left\{f(x) + g(y)\; | \;  Ax+By=b, \; x \in \R^{n}, \; y \in \R^{n}\right\},
\]
where $f\colon \R^{n} \rightarrow \R \cup \{\infty\}$ and $g\colon \R^{n} \rightarrow \R \cup \{\infty\}$ are proper convex functions, $A\in \R^{m \times n}, B\in \R^{m \times n}$ and $b \in \R^m$.
Various practical problems of science and engineering, such as machine learning \cite{koh2007interior, yin2008bregman}, total variation denoising \cite{rudin1992nonlinear} and statistics \cite{trevor2009elements} can be formulated as Problem \eqref{cp}.
Usually, we say that $f$ is a loss function and $g$ is a structured regularization term.


The augmented Lagrangian function of \eqref{cp} is defined as
 \[\label{augL}
\L_{\beta}(x,y,\la):= f(x) + g(y) -\langle \la,Ax+By-b \rangle+\frac{\beta}{2}\|Ax + By - b \|^2,\]
where $\lambda \in \R^m$ is the Lagrangian multiplier for the linear constraints $Ax + By = b$ in \eqref{cp}, and $\beta$ is a positive scalar. Note that $\L_{\beta} \colon \R^{n} \times \R^{n} \times \R^{m} \rightarrow \R$.

A number of efficient first-order algorithms have been developed for problem \eqref{cp} including operator splitting methods \cite{attouch2010parallel, bauschke2011convex, combettes2009iterative, douglas1956numerical, eckstein1992douglas, lions1979splitting}, gradient methods \cite{nesterov2013gradient, tseng2010approximation, tseng2009coordinate}, primal dual methods \cite{chambolle2011first, chen2013primal, esser2009general}, etc. One may solve problem \eqref{cp} is the classical augmented Lagrangian method (ALM), which generates the updates
\begin{equation}{\label{equ:ALM}}
\left\{
\begin{aligned}
& (x^{k+1}, y^{k+1}) = \arg\min_{x,y}\ \L_{\beta}(x,y,\la^k) \\
& \la^{k+1}  = \la^{k}-\beta(A x^{k+1} + By^{k+1} - b).
\end{aligned}
\right.
\end{equation}
In this case, the vectors $x^{k+1}$ and $y^{k+1}$ should be updated at the same time ignoring the separability of the original functions. Generally, the joint minimization problem \eqref{equ:ALM} is a challenge to be solved exactly or approximately with a high accuracy. We want to exploit the separability of the objective function to reduce the difficulty. The classical ADMM is one of such methods, and it efficiently solves problem \eqref{cp} \cite{glowinski1975approximation, gabay1976dual}. The convergence analysis for the classical ADMM can be referred to \cite{glowinski1975approximation, gabay1976dual, fortin1983chapter, boyd2011distributed, eckstein2015understanding}.

Fazel et al. \cite{fazel2013hankel} proposed a more convenient semi-proximal ADMM by adding proximal terms to subproblems which takes the following scheme:
\begin{subnumcases}{\label{equ:sADMM}}
\label{equ:sADMM1} x^{k+1}=\arg\min_{x}\ \L_{\beta}(x,y^k,\la^k) + \frac12 \|x - x^{k}\|_{S}^2,\\
\label{equ:sADMM2} y^{k+1}=\arg\min_{y}\ \L_{\beta}(x^{k+1},y,\la^k) + \frac12 \|y - y^{k}\|_{T}^2,\\
\label{equ:sADMM3} \la^{k+1}=\la^{k}-\alpha\beta(A x^{k+1} + By^{k+1} - b),
\end{subnumcases}
where $\alpha \in (0, (1+\sqrt 5)/2)$, and $S, T \succeq 0$. For a vector $z\in \R^n$ and a semidefinite matrix $G$, the norm $\|\cdot\|_G$ is defined by $\|z\|_{G} \;=\sqrt{z^ \top Gz}$. In this paper, even if $G \in \R^{n \times n}$ is not positive semidefinite, we denote $\|z\|^2_G = z^ \top Gz$ for simplicity.

The proximal ADMM covers the classical ADMM when $S=T=0$. When $S$ and $T$ are two positive definite matrices and $\alpha=1$, this semi-proximal ADMM reduces to the proximal ADMM proposed by Eckstein \cite{eckstein1994somesaddle}.
The proximal ADMM has an advantage that its subproblems are easy to solve, and it also can efficiently handle the multi-block convex optimization problem which is known as block-wise ADMM \cite{he2015block}. See \cite{deng2012global, he2012convergence, fazel2013hankel, xu2011class} for more details of the semi-proximal ADMM.

It is well known that the global convergence of the semi-proximal ADMM \eqref{equ:sADMM} is easier to prove. However, it is not satisfactory in numerical performance.
The paper \cite{Deng2016} mentioned that the proximal matrix $T$ in \eqref{equ:sADMM2} could be indefinite if $\alpha \in (0,1)$ though it provided no further discussions on theoretical properties.
Then Li et al. \cite{li2016majorized} proved the global convergence. He et al. \cite{he2017optimal} proposed a linearized version of ADMM with a positive-indefinite proximal term. They considered the case that matrix $S=0$ and $\alpha = 1$ in \eqref{equ:sADMM}, and generated the proximal matrix $T$ as
\[\label{T0}
T = \tau r I - \beta B^\top B ~~\mathrm{with}~~ r > \beta\|B^\top B\|,~~\tau \in (0.75,1).
\]
The proximal matrix $T$ is not necessarily positive semidefinite. A smaller value $\tau \in (0.75,1)$ can ensure the convergence and also give better numerical performance.

How to choose the proximal term is also one of the important research topics for ADMM. The popular proximal term is always chosen as a constant matrix. He et al. \cite{HLHY2002} extended the work to allow the parameters $\beta$, proximal terms $T$ and $S$ to be replaced by some bounded sequences of positive definite matrices $\{T_k\}$ and $\{S_k\}$. The resulting ADMM is a variable metric proximal ADMM, which is also closely related to the inexact ADMM \cite{eckstein1992douglas, chen1994proximal, HLHY2002, yuan2005improvement, eckstein2017approximate, eckstein2018relative}. The convergences of such methods have been studied in \cite{lotito2009class, banert2016fixing, Goncalves2018} but a better selection of the sequence $\{T_k\}$ has not been provided.

Quite recently, Gu and Yamashita \cite{Gu2019bfgs} proposed to construct a variable positive semi-definite sequence $\{T_k\}$ with $T_k = B_k - \nabla^2_{xx} \L_{\beta}(x,y,\la)$ when $f$ is quadratic. Note that $M = \nabla^2_{xx} \L_{\beta}(x,y,\la)$ is a constant matrix. They generated $B_k$ via the BFGS update with respect to $M$ at every iteration. Gu and Yamashita \cite{Gu2019optimal} further extended such a proximal ADMM for more general convex optimization problems with the proximal term generated by the Broyden family update. In these ADMMs, the proximal terms $T_k$ contain some second order information on the augmented Lagrangian function.
The papers \cite{Gu2019bfgs,Gu2019optimal} report some numerical results for LASSO and L1 regularized logistic regression. The results show that the algorithms can get a solution faster than the general indefinite proximal ADMM whose proximal term is fixed. Another interesting numerical result in \cite{Gu2019bfgs,Gu2019optimal} is that a variable indefinite sequence via the BFGS update also shows a good performance.

Inspired by the variable metric semi-proximal ADMM \cite{Gu2019bfgs,Gu2019optimal} and the indefinite proximal ADMM \cite{he2017optimal}, it is worth considering ADMM with a sequence of indefinite proximal matrices.
We call the resulting ADMM a variable metric indefinite proximal ADMM ({\color{blue}VMIP-ADMM}).
Throughout our discussion, we always choose the stepsize $\alpha$ in \eqref{equ:sADMM3} be 1 as that in \cite{he2017optimal}, which is good enough for such methods in practice and simple for the convergence analysis.

We now introduce the whole update scheme of the VMIP-ADMM:
\begin{subnumcases}{\label{equ:ADMMB}}
\label{equ:ADB1} x^{k+1}=\arg\min_{x}\ \L_{\beta}(x,y^k,\la^k) + \frac12 \|x - x^{k}\|_{S}^2,\\
\label{equ:ADB2} y^{k+1}=\arg\min_{y}\ \L_{\beta}(x^{k+1},y,\la^{k}) + \frac12 \|y - y^{k}\|_{T_k}^2,\\
\label{equ:ADB3} \la^{k+1}=\la^{k}-\beta(A x^{k+1} + By^{k+1} - b),
\end{subnumcases}
where $S$ is a fixed positive semi-definite and $T_k$ is possibly indefinite.
Note that the VMIP-ADMM can unify the several existing ADMMs.
\begin{itemize}
  \item  Let $S = 0$, $T_k \equiv 0$, VMIP-ADMM reduces to the classical ADMM;
  \item  Let $S$ and $T_k\equiv T$ be positive semidefinite matrices, VMIP-ADMM turns to be the semi-proximal ADMM \eqref{equ:sADMM};
  \item  Let $\{T_k\}$ be a positive semidefinite sequence, that is, $T_k\succeq 0$ for all $k$. VMIP-ADMM becomes the variable semi-proximal ADMM;
  \item  Let $S=0$, $T_k\equiv T$ be a positive indefite matrix, VMIP-ADMM covers the indefinite-proximal ADMM proposed in \cite{he2017optimal}.
\end{itemize}

We present sufficient conditions on $\{T_k\}$ for the global convergence of VMIP-ADMM.
The proof is followed by the analysis technique in Gu et al. \cite{Gu2019indefinite}, which separated the constant indefinite term ``$T$'' into two semidefinite parts as $T = T_+ - T_- $.
Moreover, we provide a construction of the indefinite term $T_k$ via the BFGS update. We extend a useful theorem in \cite{Gu2019bfgs} for a special case when $y$-subproblems \eqref{equ:ADB2} are unconstrained quadratic programming problems. We construct the $T_k$ with $T_k= B_k - M$, where $M$ is the Hessian matrix of the augmented Lagrangian function \eqref{augL} and $B_k$ is generated by the BFGS update with respect to $\tau M$, $\tau<1$.
We also show that this construction of $T_k$ satisfies the above conditions for the global convergence property when $\tau \in (0.75,1)$.

The remaining parts of the paper are organized as follows. We first give notations and some preliminaries that will be useful for subsequent analysis in Section \ref{vmip}. Then we present sufficient conditions on the proximal matrices $\{T_k\}$ for the global convergence. In Section \ref{sec:bfgs}, we discuss the choices of proximal matrix $T_k$ that guarantees the global convergence. We also show how to determine the value of $\tau$. Some conclusions and future works are given in Section \ref{sec:concl}.

\section{Global convergence of the variable metric indefinite proximal ADMM \label{vmip}}
In this section, we show the global convergence of the variable metric indefinite proximal ADMM \eqref{equ:ADMMB} (VMIP-ADMM) for problem \eqref{cp}.
To this end, we first present optimality conditions of problem \eqref{cp} and some useful properties which will be frequently used in our analysis. Then we give sufficient conditions on $\{T_k\}$ under which VMIP-ADMM converges globally.

\subsection{Optimality conditions for problem (\ref{cp})}
Let $\Omega = \R^{n} \times \R^{n} \times \R^m.$ The KKT conditions of problem \eqref{cp} are written as:
\begin{subnumcases}{\label{kkt}}
 \label{kktx} \xi_x^* - A^\top \la^* = 0,\\
 \label{kkty} \xi_y^* - B^\top \la^* = 0,\\
 \label{kktla} A x^* + B y^* - b =0,\\
 \label{kktxi} \xi_x^* \in \partial f(x^*), \;\; \xi_y^* \in \partial g(y^*).
\end{subnumcases}
Let $\Omega^*$ be a set of $(x^*, y^*, \la^*)$ satisfying the KKT conditions \eqref{kkt}.

Throughout this paper, we make the following assumption.
\begin{assumption}\label{ass:opt}
The set $\Omega^*$ of KKT points is non-empty.
\end{assumption}

The optimality conditions of subproblems \eqref{equ:ADB1} and \eqref{equ:ADB2} can be obtained respectively that
\begin{equation*}
 (x - x^{k+1})^\top \left(\xi_x^{k+1} - A^\top \la^k + \beta A^\top (Ax^{k+1} + By^k - b) + S (x^{k+1} - x^k) \right)\geq 0, \;\; \forall x \in \R^{n},
\end{equation*}
and
\begin{equation*}
 (y - y^{k+1})^\top \left(\xi_y^{k+1} - B^\top \la^k + \beta B^\top (Ax^{k+1} + By^{k+1} - b) + T_k (y^{k+1} - y^k) \right) \geq 0, \;\; \forall y\in \R^{n},
\end{equation*}
where $\xi_x^{k+1} \in \partial f(x^{k+1})$ and $\xi_y^{k+1} \in \partial g(y^{k+1})$.

Since $\lambda^{k+1} = \la^k - \beta(Ax^{k+1} + By^{k+1} - b)$ from \eqref{equ:ADB3}, we have
\begin{equation*}
- A^\top \la^k + \beta A^\top (Ax^{k+1} - b) = -A^\top \la^{k+1} - \beta A^\top B y^{k+1}
\end{equation*}
and
\begin{equation*}
- B^\top \la^k + \beta B^\top (Ax^{k+1} + By^{k+1} - b) = - B^\top \la^{k+1} .
\end{equation*}
Then the above optimality conditions can be written as
\[\label{x-opt-s}
 (x - x^{k+1})^\top \left(\xi_x^{k+1} - A^\top \la^{k+1} +  \beta A^\top B(y^k - y^{k+1}) + S (x^{k+1} - x^k) \right) \geq 0, \;\; \forall x\in \R^{n},
\]
and
\[\label{y-opt-s}
 (y - y^{k+1})^\top \left(\xi_y^{k+1} - B^\top \la^{k+1} + T_k (y^{k+1} - y^k) \right) \geq 0, \;\; \forall y\in \R^{n}.
\]

\subsection{Notations and Conditions on \texorpdfstring{$\{T_k\}$}{TEXT}}

We use the following notations throughout this paper:
\begin{equation*}
u = \left(\begin{array}{c}x \\ y \end{array}\right),\;  w = \left(\begin{array}{c}x \\ y \\ \lambda \end{array}\right).
 \end{equation*}

Since the subdifferential mappings of the closed proper convex functions $f$ and $g$ are maximal monotone, there exist two positive semidefinite matrices $\Sigma_f$ and $\Sigma_g$ such that for all $x, \hat{x} \in \R^{n}$, $\xi_x \in \partial f(x)$, and $\hat{\xi}_x \in \partial f(\hat{x})$,
\[\label{x-mono}
( x - \hat{x} )^\top (\xi_x - \hat{\xi}_x )\geq \|x - \hat{x}\|^2_{\Sigma_f},
\]
and for all $y, \hat{y} \in \R^{n}$, $\xi_y \in \partial g(y)$, and $\hat{\xi}_y \in \partial g(\hat{y})$,
\[\label{y-mono}
( y - \hat{y} )^\top (\xi_y - \hat{\xi}_y )\geq \|y - \hat{y}\|^2_{\Sigma_g}.
\]
Let $\Sigma \in \R^{2n \times 2n}$ denote
\begin{equation*}
\Sigma = \left(\begin{array}{c c} \Sigma_f & 0 \\ 0 & \Sigma_g\end{array}\right).
\end{equation*}




We first give the conditions for $S$ and the indefinite proximal sequence $\{T_k\}$ to guarantee the global convergence.
\begin{condition}
\label{cond-main}
The matrix $S$ in \eqref{equ:ADB1} satisfies
\begin{itemize}
{ \item[(a)] ~~ $S + \frac1 2 \Sigma_f \succeq 0$;
  \item[(b)] ~~ $S + \Sigma_f + \beta A^\top A \succ 0$.
}
\end{itemize}
Moreover, for sequence $\{T_k\}$ generated in \eqref{equ:ADMMB}, there exist a non-negative sequence $\{\gamma_k\}$ and positive semidefinite sequences $\{T_+^k\}$ and $\{T_-\}$ such that
 \begin{itemize}
{   \item[(c)] ~~ $T_k = T_+^k - T_- $ for all $k$;
    \item[(d)] ~~ $T_k + \Sigma_g + \beta B^\top B \succ 0$ for all $k$;
  \item[(e)] ~~ $\frac{1}{1+\gamma_k} T_+^k \preceq T_+^{k+1} \preceq (1+\gamma_k) T_+^k, \; \; \forall k \geq 0,$  $\sum\limits_{k=0}^{\infty} \gamma_k < \infty$;
  \item[(f)] ~~ $T_{k+1} + \Sigma_g + \beta B^\top B \preceq (1+\gamma_k) (T_k + \Sigma_g + \beta B^\top B)$ for all $k$;
  \item[(g)] ~~ $\exists \; c\in (0, 0.5)$,  $T_k + \frac3 2 \Sigma_g - \frac{\gamma_{k-1}}{2} T_+^k - 2T_- + (\frac34 - \frac12 c)\beta B^\top B \succeq 0$  for all $k$.
}
\end{itemize}
\end{condition}
Condition (a) and (b) indicate that the proximal marrix $S$ is allowed to be a slight indefinite but no less than $-\frac12 \Sigma_f$.
Condition (c) decomposes the indefinite matrix $T_k$ to two positive semidefinite parts. Note that we require the second part $T_-$ be fixed.
This condition will play an important role in the main analysis.
Condition (d) allows $T_k$ to be indefinite.
Condition (e) and (f) are the boundness for positive semi-definite part $T_+^k$ and indefinite $T_k$, respectably. Condition (g) is a requirement for global convergence and also an important condition for us to discuss the range of the indefiniteness.

For simplicity, we further define the following matrices. For all $k$,
\[\label{def-matrix} P_k = \left(\begin{array}{c c}S & 0\\ 0 & T_k\end{array}\right), D_k = \left(\begin{array}{c c c} S & 0 & 0\\ 0 & T_k & 0\\ 0 & 0 & \frac1 \beta I\end{array}\right),  \mathrm{and} \; G_k = \left(\begin{array}{c c c} S+\Sigma_f & 0 & 0\\ 0 & T_k + \Sigma_g + \beta B^\top B & 0 \\ 0 & 0 & \frac{1}{\beta}I\end{array}\right),\]
where $S, T_k$ and $\beta$ are given in \eqref{equ:ADMMB}.

Moreover, we also define the following matrices
\begin{subequations}\label{def-t}
\begin{align}
  \label{def-t1}&\Gamma_k = T^k_+ + T_-, \;\; \forall k, \\
  \label{def-t2}&\Lambda_k = - \frac{\gamma_{k-1}}{2} T_+^k - 2T_- + \Sigma_g, \;\; \forall k,\\
  \label{def-t3}&\Delta_k = T_k + \frac3 2 \Sigma_g - \frac{\gamma_{k-1}}{2} T_+^k - 2T_- + (\frac34 - \frac12 c)\beta B^\top B, \;\; \forall k,
 \end{align}
\end{subequations}
where $\{\gamma_k\}$ is a sequence satisfying Condition \ref{cond-main}. Note that $\Gamma_k \succeq 0$ for all $k$.

\subsection{Technical lemmas for convergence analysis of the variable metric indefinite proximal ADMM \label{sub:property}}
In order to show that VMIP-ADMM converges to a solution of \eqref{cp} globally, we first give some properties for the sequence $\{w_k\} = \{(x^k, y^k, \la^k)\}$ generated by \eqref{equ:ADMMB}.
\begin{lemma}
\label{lemma:con1}
Let $\{w^k\}$ be generated by \eqref{equ:ADMMB}. Then, for given $w^* = (x^*, y^*, \la^*) \in \Omega^*$, we have
\begin{align}\label{equ:lemma:con1}
(w^{k+1} - w^*)^\top D_k (w^{k+1}-w^k) +\|u^{k+1} - u^*\|^2_{\Sigma} \leq \beta(Ax^{k+1} - Ax^*)^\top (By^{k+1} - By^k ).
\end{align}
\end{lemma}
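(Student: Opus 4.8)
The plan is to derive \eqref{equ:lemma:con1} directly from the variational characterizations \eqref{x-opt-s} and \eqref{y-opt-s} of the two subproblems, evaluated at the KKT point $w^* = (x^*,y^*,\la^*)$, and then to invoke the multiplier update \eqref{equ:ADB3} together with the maximal monotonicity estimates \eqref{x-mono}--\eqref{y-mono}. First I would set $x = x^*$ in \eqref{x-opt-s} and $y = y^*$ in \eqref{y-opt-s}, which, after reversing signs, gives
\[
(x^{k+1}-x^*)^\top\big(\xi_x^{k+1} - A^\top\la^{k+1} + \beta A^\top B(y^k - y^{k+1}) + S(x^{k+1}-x^k)\big) \le 0,
\]
\[
(y^{k+1}-y^*)^\top\big(\xi_y^{k+1} - B^\top\la^{k+1} + T_k(y^{k+1}-y^k)\big) \le 0.
\]

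Next I would rewrite the subgradient terms using the KKT conditions \eqref{kktx}--\eqref{kkty}, which give $\xi_x^* = A^\top\la^*$ and $\xi_y^* = B^\top\la^*$. Writing $\xi_x^{k+1} - A^\top\la^{k+1} = (\xi_x^{k+1}-\xi_x^*) - A^\top(\la^{k+1}-\la^*)$ and similarly for $y$, the monotonicity bounds \eqref{x-mono}--\eqref{y-mono} let me replace $(x^{k+1}-x^*)^\top(\xi_x^{k+1}-\xi_x^*)$ by its lower bound $\|x^{k+1}-x^*\|_{\Sigma_f}^2$ and $(y^{k+1}-y^*)^\top(\xi_y^{k+1}-\xi_y^*)$ by $\|y^{k+1}-y^*\|_{\Sigma_g}^2$; since these terms sit in an expression bounded above by $0$, substituting the lower bounds preserves each inequality. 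Adding the two resulting inequalities collects $\|x^{k+1}-x^*\|_{\Sigma_f}^2 + \|y^{k+1}-y^*\|_{\Sigma_g}^2 = \|u^{k+1}-u^*\|_{\Sigma}^2$, the proximal terms $(x^{k+1}-x^*)^\top S(x^{k+1}-x^k)$ and $(y^{k+1}-y^*)^\top T_k(y^{k+1}-y^k)$, the cross term $\beta(Ax^{k+1}-Ax^*)^\top(By^k - By^{k+1})$, and the multiplier coupling $-(Ax^{k+1}-Ax^* + By^{k+1}-By^*)^\top(\la^{k+1}-\la^*)$.

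The remaining step is to simplify the multiplier coupling. Using primal feasibility \eqref{kktla}, $Ax^*+By^* = b$, so $Ax^{k+1}-Ax^* + By^{k+1}-By^* = Ax^{k+1}+By^{k+1}-b$, and by the update \eqref{equ:ADB3} this equals $\frac1\beta(\la^k - \la^{k+1})$; hence the coupling term becomes $\frac1\beta(\la^{k+1}-\la^*)^\top(\la^{k+1}-\la^k)$. Recognizing that the three quadratic coupling terms in $x$, $y$, and $\la$ assemble into $(w^{k+1}-w^*)^\top D_k (w^{k+1}-w^k)$ with $D_k = \mathrm{diag}(S, T_k, \frac1\beta I)$ as in \eqref{def-matrix}, and moving the cross term to the right-hand side, yields exactly \eqref{equ:lemma:con1}.

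I expect this to be essentially a careful bookkeeping argument rather than one with a deep obstacle. The only delicate points are the sign conventions when converting the ``$\ge 0$'' optimality inequalities into ``$\le 0$'' form and when substituting the monotonicity lower bounds, together with the correct combination of the two multiplier inner products so that they reduce to a single $\frac1\beta(\la^{k+1}-\la^*)^\top(\la^{k+1}-\la^k)$. Care is also needed to keep the cross term $\beta A^\top B(y^k - y^{k+1})$ arising from the $x$-subproblem intact, since it is precisely what produces the right-hand side of \eqref{equ:lemma:con1}. Notably, no indefiniteness of $T_k$ is exploited at this stage, so Condition \ref{cond-main} plays no role in this lemma; it will only enter the subsequent estimates that turn \eqref{equ:lemma:con1} into a contraction.
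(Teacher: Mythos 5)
Your proposal is correct and follows essentially the same route as the paper's own proof: evaluate the optimality conditions \eqref{x-opt-s}--\eqref{y-opt-s} at $(x^*,y^*)$, split the subgradient terms via the KKT relations \eqref{kktx}--\eqref{kkty}, apply the monotonicity bounds \eqref{x-mono}--\eqref{y-mono}, convert the multiplier coupling with \eqref{equ:ADB3} and \eqref{kktla}, and assemble everything into the $D_k$-quadratic form. The only cosmetic difference is that you keep the cross term $\beta A^\top B(y^k-y^{k+1})$ on the left until the final step, while the paper moves it to the right-hand side immediately; the bookkeeping is otherwise identical.
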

\begin{proof}
By taking $x=x^*$ and $y=y^*$ in the optimality conditions \eqref{x-opt-s} and \eqref{y-opt-s}, respectively, we have
$$
 (x^{k+1} - x^*)^\top (\xi_x^{k+1} - A^\top \la^{k+1} +  \beta A^\top B(y^k - y^{k+1}) + S (x^{k+1} - x^k) ) \leq 0,
$$
and
$$
 (y^{k+1} - y^*)^\top (\xi_y^{k+1} - B^\top \la^{k+1} + T_k (y^{k+1} - y^k)) \leq 0,
$$
where $\xi_x^{k+1} \in \partial f(x^{k+1})$ and $\xi_y^{k+1} \in \partial g(y^{k+1})$.

The inequalities are further rearranged as
\[\label{ineq:ADB1}
 (x^{k+1} - x^*)^\top S (x^{k+1} - x^k) + (x^{k+1} - x^*)^\top(\xi_x^{k+1} - A^\top \la^{k+1}) \leq  \beta (Ax^{k+1} - Ax^*)^\top (By^{k+1} - By^k)
\]
and
\[\label{ineq:ADB2}
 (y^{k+1} - y^*)^\top T_k (y^{k+1} - y^k) + (y^{k+1} - y^*)^\top (\xi_y^{k+1} - B^\top \la^{k+1}) \leq 0.
\]

Moreover, from \eqref{x-mono}-\eqref{y-mono} with $x=x^{k+1}$, $y=y^{k+1}$, $\hat{x} = x^*$ and $\hat{y}=y^*$, we have
\[\label{f-mon}
(x^{k+1} - x^*)^\top (\xi_x^{k+1} - \xi_x^*) \geq \|x^{k+1} - x^*\|^2_{\Sigma_f},
\]
and
\[\label{g-mon}
( y^{k+1} - y^* )^\top (\xi_y^{k+1} - \xi_y^*) \geq \|y^{k+1} - y^*\|^2_{\Sigma_g},
\]
where $\xi_x^* \in \partial f(x^*)$ and $\xi_y^* \in \partial g(y^*)$ satisfy the KKT conditions \eqref{kktx} and \eqref{kkty}, respectively.
It then follows from \eqref{kktx} and \eqref{f-mon} that
\begin{align*}
(x^{k+1} - x^*)^\top (\xi_x^{k+1} - A^\top \la^{k+1}) & = (x^{k+1} - x^*)^\top (\xi_x^{k+1} - \xi_x^*) + (x^{k+1} - x^*)^\top (\xi_x^* - A^\top \la^{k+1}) \nn \\
& \geq \|x^{k+1} - x^*\|^2_{\Sigma_f} + (Ax^{k+1} - Ax^*)^\top (\la^* - \la^{k+1}).
\end{align*}
Combining this inequality and \eqref{ineq:ADB1}, we have
\begin{align}\label{lemma1:inex}
& {} (x^{k+1} - x^*)^\top S (x^{k+1} - x^k) + (Ax^{k+1} - Ax^*)^\top(\lambda^* - \lambda^{k+1}) + \|x^{k+1} - x^* \|_{\Sigma_f}^2 \nn \\
& {} \;\;\;\; \leq \beta (Ax^{k+1} - Ax^*)^\top(By^{k+1} - By^k).
\end{align}
In a similar way, we have from \eqref{kkty}, \eqref{ineq:ADB2} and \eqref{g-mon} that
\[\label{lemma1:iney}
 (y^{k+1} - y^*)^\top T_k (y^{k+1} - y^k) + (By^{k+1} - By^*)^\top(\lambda^* - \lambda^{k+1}) + \|y^{k+1} - y^* \|_{\Sigma_g}^2 \leq 0.
\]
Rearranging \eqref{equ:ADB3}, we have $ A x^{k+1} + By^{k+1} - b= \frac1 \beta \left(\la^{k}- \la^{k+1}\right)$. It then follows from \eqref{kktla} that
$$ Ax^{k+1} + By^{k+1} - Ax^* - By^* = \frac1\beta (\lambda^k - \lambda^{k+1}).$$
Adding \eqref{lemma1:inex} and \eqref{lemma1:iney}, and recalling the definition of $D_k$ and $\Sigma$, it holds that
\begin{align*}
& (w^{k+1} - w^*)^\top D_k (w^{k+1} - w^k) + \|u^{k+1} - u^* \|_{\Sigma}^2 \nn \\
& = (x^{k+1} - x^*)^\top S (x^{k+1} - x^k) + (y^{k+1} - y^*)^\top T_k (y^{k+1} - y^k) \nn \\
& \;\;\;\; + \frac1\beta (\lambda^{k+1} - \lambda^k)^\top( \lambda^{k+1} - \lambda^*) + \|u^{k+1} - u^* \|_{\Sigma}^2 \nn \\
& = (x^{k+1} - x^*)^\top S (x^{k+1} - x^k) + (y^{k+1} - y^*)^\top T_k (y^{k+1} - y^k) \nn \\
& \;\;\;\;+ (Ax^{k+1} + By^{k+1} - Ax^*- By^*)^\top(\lambda^* - \lambda^{k+1}) + \|u^{k+1} - u^* \|_{\Sigma}^2 \nn \\
& \leq \beta (Ax^{k+1} - Ax^*)^\top(By^{k+1} - By^k). \qedhere
\end{align*}
\end{proof}

The inequality \eqref{equ:lemma:con1} in Lemma \ref{lemma:con1} is further rearranged as follows.
\begin{lemma}\label{lemma:con11}
Let $\{w^k\}$ be generated by \eqref{equ:ADMMB}. Then, for given $w^* = (x^*, y^*, \la^*) \in \Omega^*$, we have
\begin{align}\label{ineq:lemma:con11}
& 2(w^{k+1} - w^*)^\top D_k (w^{k+1}-w^k)   \nonumber \\
& \leq 2(By^{k+1} - By^k)^\top (\lambda^k - \lambda^{k+1}) - 2\beta (By^{k+1} - By^*)^\top (By^{k+1} - By^k ) - 2\|u^{k+1} - u^*\|^2_{\Sigma}.  \qedhere
\end{align}
\end{lemma}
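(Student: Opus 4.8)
The plan is to obtain \eqref{ineq:lemma:con11} as a pure algebraic rearrangement of the inequality \eqref{equ:lemma:con1} established in Lemma \ref{lemma:con1}; no new variational or monotonicity input is needed. First I would multiply \eqref{equ:lemma:con1} by $2$ and move the term $2\|u^{k+1}-u^*\|^2_{\Sigma}$ to the right-hand side, leaving
\[
2(w^{k+1}-w^*)^\top D_k (w^{k+1}-w^k) \leq 2\beta(Ax^{k+1}-Ax^*)^\top(By^{k+1}-By^k) - 2\|u^{k+1}-u^*\|^2_{\Sigma}.
\]
The entire task then reduces to rewriting the cross term $2\beta(Ax^{k+1}-Ax^*)^\top(By^{k+1}-By^k)$ so that the $x$-dependence is replaced by a multiplier difference and a $y$-term.

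The key ingredient is the constraint identity already derived inside the proof of Lemma \ref{lemma:con1}, namely $Ax^{k+1}+By^{k+1}-Ax^*-By^* = \frac1\beta(\la^k-\la^{k+1})$, which comes from combining the $\la$-update \eqref{equ:ADB3} with the primal feasibility \eqref{kktla} at $w^*$. Solving this for the $A$-block gives $\beta(Ax^{k+1}-Ax^*) = (\la^k-\la^{k+1}) - \beta(By^{k+1}-By^*)$. Substituting this expression into the cross term and distributing the common factor $(By^{k+1}-By^k)$ splits it exactly into $2(By^{k+1}-By^k)^\top(\la^k-\la^{k+1})$ and $-2\beta(By^{k+1}-By^*)^\top(By^{k+1}-By^k)$, which together with the retained $-2\|u^{k+1}-u^*\|^2_{\Sigma}$ yields precisely \eqref{ineq:lemma:con11}.

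Since every step is an identity, there is no genuine analytic obstacle here; the only place demanding care is the bookkeeping — tracking the sign of the multiplier difference $\la^k-\la^{k+1}$ (as opposed to $\la^{k+1}-\la^k$) and keeping $By^{k+1}-By^k$ as the common right factor throughout, so that the two resulting inner products line up with the stated form. The lemma is essentially a convenient reformulation that isolates the quantities $By^{k+1}-By^k$ and $\la^k-\la^{k+1}$, which I expect to be completed to squares (via a cross-term identity of the type $2a^\top b = \|a\|^2+\|b\|^2-\|a-b\|^2$) in the subsequent monotone-decrease estimate on $\|w^k-w^*\|^2_{G_k}$.
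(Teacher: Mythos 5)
Your proof is correct and follows essentially the same route as the paper: both arguments take the inequality \eqref{equ:lemma:con1} from Lemma \ref{lemma:con1} and rewrite the cross term $2\beta(Ax^{k+1}-Ax^*)^\top(By^{k+1}-By^k)$ using the $\lambda$-update \eqref{equ:ADB3} together with primal feasibility $Ax^*+By^*=b$, splitting it into $2(By^{k+1}-By^k)^\top(\la^k-\la^{k+1})$ and $-2\beta(By^{k+1}-By^*)^\top(By^{k+1}-By^k)$. The only cosmetic difference is that the paper adds and subtracts $By^{k+1}$ inside the factor $Ax^{k+1}-Ax^*$, whereas you solve the constraint identity for $\beta(Ax^{k+1}-Ax^*)$ and substitute; these are the same algebraic decomposition.
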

\begin{proof}
Noting that $Ax^* + By^* -b=0$, the twice of the right hand of \eqref{equ:lemma:con1} is written as
\begin{align*}
& 2\beta(Ax^{k+1} - Ax^*)^\top (By^{k+1} - By^k )  \nonumber \\
& = 2 \beta (A x^{k+1} + By^* - b  + By^{k+1} -By^{k+1})^ \top (By^{k+1} - By^k )  \nonumber \\
& = 2\beta(Ax^{k+1} + By^{k+1} - b)^\top (By^{k+1} - By^k ) - 2\beta (By^{k+1} - By^*)^\top (By^{k+1} - By^k )  \nonumber \\
& = 2(By^{k+1} - By^k)^\top (\lambda^k - \lambda^{k+1}) - 2\beta (By^{k+1} - By^*)^\top (By^{k+1} - By^k ),
\end{align*}
where the last equality follows from \eqref{equ:ADB3}.
Then the assertion is directly obtained from \eqref{equ:lemma:con1}.
\end{proof}

Next we give a simple but important lemma.
\begin{lemma}\label{tech}
For vectors $a, b\in \R^n$, and symmetric positive semidefinite matrices $M_1, M_2 \in \R^{n\times n}$, we have that
\[\label{ineq:tech}
a^\top M_1 b - a^\top M_2 b \leq \frac 12 a^\top (M_1+ M_2) a + \frac 12 b^\top (M_1+ M_2) b.
\]
\end{lemma}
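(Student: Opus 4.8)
The plan is to prove the two halves of the claim separately, each by a single completion-of-squares argument, and then add them. The key observation is that a symmetric positive semidefinite matrix $M$ induces a semi-inner product $(a,b) \mapsto a^\top M b$, so the Cauchy--Schwarz-type bound $|a^\top M b| \leq \tfrac12(a^\top M a + b^\top M b)$ holds; the present lemma is just the appropriate signed combination of two such bounds, one for $M_1$ and one for $M_2$.

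First I would handle the term $a^\top M_1 b$. Since $M_1 \succeq 0$, the vector $a - b$ satisfies $(a-b)^\top M_1 (a-b) \geq 0$. Expanding and using the symmetry of $M_1$ (so that $a^\top M_1 b = b^\top M_1 a$), this gives $a^\top M_1 a - 2\, a^\top M_1 b + b^\top M_1 b \geq 0$, hence
\[
a^\top M_1 b \leq \tfrac12\bigl(a^\top M_1 a + b^\top M_1 b\bigr).
\]

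Next I would handle the term $-a^\top M_2 b$. Here the right choice of sign is crucial: since $M_2 \succeq 0$, I apply the quadratic to $a + b$ rather than $a - b$, obtaining $(a+b)^\top M_2 (a+b) \geq 0$, which expands to $a^\top M_2 a + 2\, a^\top M_2 b + b^\top M_2 b \geq 0$, and therefore
\[
- a^\top M_2 b \leq \tfrac12\bigl(a^\top M_2 a + b^\top M_2 b\bigr).
\]
Adding the last two displayed inequalities yields
\[
a^\top M_1 b - a^\top M_2 b \leq \tfrac12\bigl(a^\top M_1 a + b^\top M_1 b\bigr) + \tfrac12\bigl(a^\top M_2 a + b^\top M_2 b\bigr) = \tfrac12\, a^\top (M_1 + M_2) a + \tfrac12\, b^\top (M_1 + M_2) b,
\]
which is exactly \eqref{ineq:tech}.

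There is no substantive obstacle in this lemma; the only point requiring care is the bookkeeping of signs, namely using the expansion of $(a-b)^\top M_1 (a-b)$ for the $M_1$ term and of $(a+b)^\top M_2 (a+b)$ for the $M_2$ term, so that the two cross terms align with the left-hand side $a^\top M_1 b - a^\top M_2 b$. Both steps use only the positive semidefiniteness and symmetry of the matrices, and no further hypotheses are needed.
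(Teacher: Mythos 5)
Your proof is correct and follows essentially the same route as the paper: the paper bounds $a^\top M_1 b$ via $\|a-b\|_{M_1}^2 \geq 0$ and then handles $-a^\top M_2 b$ ``in a similar way'' (i.e., via $\|a+b\|_{M_2}^2 \geq 0$), exactly as you do, before adding the two bounds. Your write-up is just slightly more explicit about the sign choice in the second step.
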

\begin{proof}
For a positive semidefinite matrix $M_1$, we have
$$0 \leq \frac 12 \|a - b\|_{M_1}^2 = \frac 12 a^\top M_1 a + \frac 12 b^\top M_1 b - a^\top M_1 b,$$
which implies
\[\label{ineq:tech:1}
a^\top M_1 b \leq \frac 12 a^\top M_1 a + \frac 12 b^\top M_1 b.
\]
In a similar way for $M_2$, we have
\[\label{ineq:tech:2}
- a^\top M_2 b \leq \frac 12 a^\top M_2 a + \frac 12 b^\top M_2 b.\]
The assertion immediately follows by adding \eqref{ineq:tech:1} and \eqref{ineq:tech:2}.
\end{proof}

In order to bound $(w^{k+1} - w^*)^\top D_k (w^{k+1}-w^k)$ further, we now give two technical lemmas to estimate upper-bounds for the crossing term $(By^{k+1} - By^k)^\top (\lambda^k - \lambda^{k+1})$ in \eqref{ineq:lemma:con11}.
\begin{lemma}
\label{lemma:con2}
Let $\{w^k\}$ be generated by the scheme \eqref{equ:ADMMB}. Suppose that the proximal sequence $\{T_k\}$ satisfies Condition \ref{cond-main}. Then it holds that
\begin{align}\label{ineq:lemma:con2}
(By^{k+1} - By^k)^\top (\lambda^k - \lambda^{k+1}) \leq \frac12 \|y^{k-1} - y^k\|_{\Gamma_{k-1}}^2 - \frac12 \|y^{k+1} - y^k\|_{\Gamma_{k}}^2 - \|y^{k+1} - y^k\|_{\Lambda_k}^2,
\end{align}
where $\Gamma_{k}$ and $\Lambda_k$ are defined in \eqref{def-t}.
\end{lemma}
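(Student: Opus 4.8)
The plan is to exploit the optimality condition \eqref{y-opt-s} for the $y$-subproblem at two consecutive iterations and subtract them, so that the crossing term $(By^{k+1}-By^k)^\top(\la^k-\la^{k+1})$ gets re-expressed through the subgradient difference $\xi_y^{k+1}-\xi_y^k$ and the proximal quadratic forms. Concretely, writing $d_k := y^{k+1}-y^k$ for brevity, I would instantiate \eqref{y-opt-s} at step $k$ with the test point $y=y^k$, and at step $k-1$ with the test point $y=y^{k+1}$; adding the two resulting inequalities cancels the linear $\xi_y$ terms against each other up to the monotone part and produces the combination $d_k^\top B^\top(\la^{k+1}-\la^k) - d_k^\top T_k d_k + d_k^\top T_{k-1} d_{k-1}$. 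This ``difference of optimality conditions across iterations'' device is exactly what lets the multiplier gap $B^\top(\la^{k+1}-\la^k)$ enter the estimate.

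Next I would apply the monotonicity \eqref{y-mono} with $y=y^{k+1}$, $\hat y=y^k$ to get $d_k^\top(\xi_y^{k+1}-\xi_y^k)\geq\|d_k\|_{\Sigma_g}^2$, and recall from \eqref{equ:ADB3} that $(By^{k+1}-By^k)^\top(\la^k-\la^{k+1})=-d_k^\top B^\top(\la^{k+1}-\la^k)$. Substituting yields the intermediate bound
\begin{equation*}
(By^{k+1}-By^k)^\top(\la^k-\la^{k+1}) \leq -\|d_k\|_{\Sigma_g}^2 - \|d_k\|_{T_k}^2 + d_k^\top T_{k-1} d_{k-1}.
\end{equation*}
The only genuinely mixed term left is $d_k^\top T_{k-1} d_{k-1}$, and the key idea is to split it via the decomposition $T_{k-1}=T_+^{k-1}-T_-$ from Condition \ref{cond-main}(c) and then apply Lemma \ref{tech} with $M_1=T_+^{k-1}$, $M_2=T_-$, $a=d_k$, $b=d_{k-1}$. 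This gives $d_k^\top T_{k-1} d_{k-1}\leq\tfrac12\|d_k\|_{\Gamma_{k-1}}^2+\tfrac12\|d_{k-1}\|_{\Gamma_{k-1}}^2$ with $\Gamma_{k-1}=T_+^{k-1}+T_-$, and the term $\tfrac12\|d_{k-1}\|_{\Gamma_{k-1}}^2=\tfrac12\|y^{k-1}-y^k\|_{\Gamma_{k-1}}^2$ is precisely the telescoping head appearing on the right-hand side of the claim.

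It then remains to check that the $d_k$-only part collapses into $-\tfrac12\|d_k\|_{\Gamma_k}^2-\|d_k\|_{\Lambda_k}^2$, i.e. to verify the matrix inequality
\begin{equation*}
T_k+\Sigma_g-\tfrac12\Gamma_{k-1}-\tfrac12\Gamma_k-\Lambda_k\succeq 0.
\end{equation*}
Substituting the definitions of $\Gamma_{k-1},\Gamma_k,\Lambda_k$, the $\Sigma_g$ pieces cancel and the residual is $T_k-\tfrac12 T_+^{k-1}-\tfrac12 T_+^k+\tfrac{\gamma_{k-1}}{2}T_+^k+T_-$. I would lower-bound $-\tfrac12 T_+^{k-1}$ by invoking Condition \ref{cond-main}(e), whose left inequality (shifted by one index) gives $T_+^{k-1}\preceq(1+\gamma_{k-1})T_+^k$; after this substitution the $\gamma_{k-1}$ contributions cancel and the expression reduces to $T_k-T_+^k+T_-$, which is identically $0$ by Condition \ref{cond-main}(c). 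I expect this final algebraic verification to be the main obstacle: one must use \ref{cond-main}(e) in the correct direction — bounding $T_+^{k-1}$ from above so that $-\tfrac12 T_+^{k-1}$ is bounded below — and must track the $\gamma_{k-1}$ and $T_-$ coefficients carefully so that they cancel exactly. It is worth noting that this lemma needs only Conditions (c) and (e) together with monotonicity and Lemma \ref{tech}; Condition (g) is not required here and will instead be invoked later when assembling the full descent inequality.
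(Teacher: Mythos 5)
Your proposal is correct and follows essentially the same route as the paper's own proof: differencing the two consecutive $y$-optimality conditions, applying the monotonicity bound \eqref{y-mono}, splitting the mixed term $(y^{k+1}-y^k)^\top T_{k-1}(y^k-y^{k-1})$ via Lemma \ref{tech} with $M_1=T_+^{k-1}$, $M_2=T_-$, and then using $T_+^{k-1}\preceq(1+\gamma_{k-1})T_+^k$ so that the remaining quadratic in $y^{k+1}-y^k$ collapses to $-\tfrac12\|y^{k+1}-y^k\|_{\Gamma_k}^2-\|y^{k+1}-y^k\|_{\Lambda_k}^2$. The only cosmetic difference is that you gather all the $\left(y^{k+1}-y^k\right)$-quadratic terms into a single matrix inequality verified at the end, whereas the paper applies the same bound mid-chain (citing Condition \ref{cond-main} (d) at that step, evidently a typo for (e), which is the condition you correctly invoke).
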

\begin{proof}
From the optimality condition \eqref{y-opt-s} for $y^{k+1}$, we can easily derive the optimality condition for $y^k$ as
\[\label{lemma:con2:ineq2}
(y - y^{k})^\top (\xi_y^{k} - B^\top \lambda^{k} + T_{k-1} (y^{k}-y^{k-1}))\geq 0, \;\; \forall y \in \R^{n}.
\]
Choosing $y=y^k$ in \eqref{y-opt-s}, we have
\begin{align}\label{lemma:con2:ineq12}
0 & \leq (y^k - y^{k+1})^\top (\xi_y^{k+1} - B^\top \lambda^{k+1} + T_k (y^{k+1}-y^k)) \nn \\
& = (y^{k+1} - y^k)^\top (-\xi_y^{k+1} + B^\top \lambda^{k+1} - T_k (y^{k+1}-y^k)).
\end{align}
Moreover, letting $y=y^{k+1}$ in \eqref{lemma:con2:ineq2}, we have
\[\label{lemma:con2:ineq22}
0 \leq (y^{k+1} - y^{k})^\top (\xi_y^{k} - B^\top \lambda^{k} + T_{k-1} (y^{k}-y^{k-1})).
\]
Summing inequalities \eqref{lemma:con2:ineq12} and \eqref{lemma:con2:ineq22}, we obtain that
\begin{align*}
0 \leq & -(y^{k+1}- y^{k})^\top (\xi_y^{k+1} - \xi_y^{k}) + (By^{k+1} - By^k)^\top (\lambda^{k+1} - \lambda^k)  \nn \\
& + (y^{k+1}-y^k)^\top T_{k-1}(y^{k}-y^{k-1}) - \|y^{k+1} - y^k\|^2_{T_k}.
\end{align*}
It then follows from \eqref{y-mono} that
$$
0 \leq - \|y^{k+1} - y^k\|^2_{\Sigma_g} + (By^{k+1} - By^k)^\top (\lambda^{k+1} - \lambda^k) + (y^{k+1}-y^k)^\top T_{k-1}(y^{k}-y^{k-1}) - \|y^{k+1} - y^k\|^2_{T_k}, $$
which is equivalent to
\[\label{lemma:con2:ineq4}
(By^{k+1} - By^k)^\top (\lambda^k - \lambda^{k+1}) \leq - \|y^{k+1} - y^k\|^2_{T_k} + (y^{k+1}-y^k)^\top T_{k-1}(y^{k}-y^{k-1}) - \|y^{k+1} - y^k\|^2_{\Sigma_g}. \]

Recall that $T_{k-1} = T_+^{k-1} - T_- $ from (c) in Condition \ref{cond-main} and $T_+^{k-1}, T_- \succeq 0$. Then we have
\begin{align}\label{lemma:con2:ineq3}
(y^{k+1}-y^k)^\top T_{k-1}(y^{k}-y^{k-1}) & {} = (y^{k+1}-y^k)^\top T_+^{k-1}(y^{k}-y^{k-1}) - (y^{k+1}-y^k)^\top T_-(y^{k}-y^{k-1}) \nn \\
&{} \leq  \frac12 \|y^{k+1} - y^k\|_{T_+^{k-1} + T_-}^2  + \frac12 \|y^{k-1} - y^k\|_{T_+^{k-1} + T_-}^2,
\end{align}
where the inequality follows from \eqref{ineq:tech} with $a = (y^{k+1}-y^k)$, $b = (y^{k}-y^{k-1})$, $M_1 = T_+^{k-1}$ and $M_2 = T_-$.

We then have from \eqref{lemma:con2:ineq4} that
\begin{align*}
&{} (By^{k+1} - By^k)^\top (\lambda^k - \lambda^{k+1}) \nonumber \\
& \leq - \|y^{k+1} - y^k\|^2_{T_k} + (y^{k+1}-y^k)^\top T_{k-1}(y^{k}-y^{k-1}) - \|y^{k+1} - y^k\|^2_{\Sigma_g}  \nonumber \\
& \leq  - \|y^{k+1} - y^k\|^2_{T_+^k - T_-} + \frac12 \|y^{k+1} - y^k\|_{T_+^{k-1} + T_-}^2  + \frac12 \|y^{k-1} - y^k\|_{T_+^{k-1} + T_-}^2  - \|y^{k+1} - y^k\|^2_{\Sigma_g}  \nonumber \\
& \leq  - \|y^{k+1} - y^k\|^2_{T_+^k - T_-} + \frac12 \|y^{k+1} - y^k\|_{(1+\gamma_{k-1}) T_+^k + T_-}^2  + \frac12 \|y^{k-1} - y^k\|_{T_+^{k-1} + T_-}^2 - \|y^{k+1} - y^k\|^2_{\Sigma_g}  \nonumber \\
& = \frac12 \|y^{k-1} - y^k\|_{T_+^{k-1} + T_-}^2 - \frac12 \|y^{k+1} - y^k\|_{T_+^{k} + T_-}^2 - \|y^{k+1} - y^k\|_{-\frac{\gamma_{k-1}}{2}T_+^k -2T_- + \Sigma_g}^2 \nn \\
& = \frac12 \|y^{k-1} - y^k\|_{\Gamma_{k-1}}^2 - \frac12 \|y^{k+1} - y^k\|_{\Gamma_{k}}^2 - \|y^{k+1} - y^k\|_{\Lambda_k}^2,
\end{align*}
where the second inequality follows from $T_k = T_+^{k} - T_- $ and \eqref{lemma:con2:ineq3}, the third inequality follows from Condition \ref{cond-main} (d), and the last equality is from the definitions \eqref{def-t1} and \eqref{def-t2}.
Then it shows the assertion \eqref{ineq:lemma:con2}.
\end{proof}

Besides Lemma \ref{lemma:con2}, we can derive another estimation for $(By^{k+1} - By^k)^\top (\lambda^k - \lambda^{k+1})$, whose proof is similar to that in \cite[Lemma 4.4]{he2017optimal}.
\begin{lemma}
\label{lemma:con22}
Let $\{w^k\}$ be generated by the scheme \eqref{equ:ADMMB}. Then, for any $c\in (0, 0.5)$, it holds that
\[\label{ineq:lemma:con22}
(By^{k+1} - By^k)^\top (\lambda^k - \lambda^{k+1}) \leq \left(\frac14 + \frac12 c\right) \beta\|By^{k+1} - By^k\|^2 + (1-c)\frac1 \beta \|\lambda^{k+1} - \lambda^k\|^2.
\]
\end{lemma}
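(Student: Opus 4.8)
The plan is to prove this purely by a weighted Cauchy--Schwarz (Young) inequality applied to the cross term; no property of the proximal sequence $\{T_k\}$ is needed, which is consistent with the fact that the right-hand side of \eqref{ineq:lemma:con22} does not involve $T_k$ at all. Writing $p = By^{k+1} - By^k$ and $q = \lambda^k - \lambda^{k+1}$, the quantity to be bounded is simply $p^\top q$, and the target is an estimate of the form $p^\top q \le \alpha\,\beta\|p\|^2 + \delta\,\frac1\beta\|q\|^2$ with $\alpha = \frac14 + \frac12 c$ and $\delta = 1-c$. The whole content is therefore to choose the splitting weight correctly and to verify one elementary scalar inequality in $c$.

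First I would complete the square: for the weight $\alpha = \frac14 + \frac12 c > 0$ the expansion
\[
0 \le \left\| \sqrt{\alpha\beta}\, p - \frac{1}{2\sqrt{\alpha\beta}}\, q \right\|^2 = \alpha\beta\|p\|^2 - p^\top q + \frac{1}{4\alpha\beta}\|q\|^2
\]
yields directly $p^\top q \le \alpha\beta\|p\|^2 + \frac{1}{4\alpha}\cdot\frac1\beta\|q\|^2$. Substituting $\alpha = \frac14 + \frac12 c = \frac{1+2c}{4}$ gives the coefficient $\frac{1}{4\alpha} = \frac{1}{1+2c}$ on the dual term, so at this stage
\[
(By^{k+1} - By^k)^\top (\lambda^k - \lambda^{k+1}) \le \left(\frac14 + \frac12 c\right)\beta\|By^{k+1} - By^k\|^2 + \frac{1}{1+2c}\,\frac1\beta\|\lambda^{k+1} - \lambda^k\|^2 .
\]

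It then remains to absorb the dual coefficient into $1-c$, i.e.\ to check $\frac{1}{1+2c} \le 1-c$ on $c \in (0,0.5)$. Since $1+2c > 0$ there, this is equivalent to $(1+2c)(1-c) \ge 1$, and expanding gives $(1+2c)(1-c) = 1 + c - 2c^2 = 1 + c(1-2c)$, which is $\ge 1$ precisely because $c>0$ and $1-2c>0$ for $c\in(0,0.5)$. Combining this with the previous display and the nonnegativity of $\frac1\beta\|\lambda^{k+1}-\lambda^k\|^2$ proves \eqref{ineq:lemma:con22}. The argument mirrors the template of \cite[Lemma 4.4]{he2017optimal}; the only place demanding care is matching the split weight $\alpha$ to the prescribed coefficient $\frac14 + \frac12 c$ so that the residual dual coefficient $\frac{1}{1+2c}$ is controlled by $1-c$, and this hinges entirely on the sign of $c(1-2c)$ on the admissible range, which is the sole reason the lemma restricts $c$ to $(0,0.5)$.
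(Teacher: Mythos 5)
Your proof is correct, and every step checks out: the completed square gives $p^\top q \le \alpha\beta\|p\|^2 + \frac{1}{4\alpha\beta}\|q\|^2$, the substitution $\alpha = \frac{1+2c}{4}$ yields the dual coefficient $\frac{1}{1+2c}$, and the scalar inequality $\frac{1}{1+2c}\le 1-c$ holds on $(0,0.5)$ because $(1+2c)(1-c)=1+c(1-2c)\ge 1$ there. Note, however, that the paper itself supplies no argument for this lemma at all: its ``proof'' is a bare pointer to Lemma 4.4 of \cite{he2017optimal}, so your write-up provides the actual content that the paper delegates to the reference (and the cited proof is, in substance, the same weighted Cauchy--Schwarz/Young argument you give). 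Your proof also makes explicit a point the paper leaves implicit: the inequality is a purely algebraic fact about two arbitrary vectors $p=By^{k+1}-By^k$ and $q=\lambda^k-\lambda^{k+1}$ with $\beta>0$, so neither the update scheme \eqref{equ:ADMMB} nor any property of $\{T_k\}$ or Condition \ref{cond-main} is needed --- the hypothesis ``$\{w^k\}$ generated by \eqref{equ:ADMMB}'' is vacuous here, and the restriction $c\in(0,0.5)$ enters only through the sign of $c(1-2c)$, exactly as you identify. The one cosmetic gap is that self-containedness makes the phrase ``mirrors the template of \cite[Lemma 4.4]{he2017optimal}'' unnecessary; your argument stands on its own.
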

\begin{proof}
See \cite[Lemma 4.4]{he2017optimal}.
\end{proof}

\medskip
Based on the above two lemmas for $(By^{k+1} - By^k)^\top (\lambda^k - \lambda^{k+1})$, we can further bound $(w^{k+1} - w^*)^\top D_k (w^{k+1}-w^k)$ in \eqref{ineq:lemma:con11} of Lemma \ref{lemma:con11}.
\begin{lemma}
\label{lemma:con12}
Let $\{w^k\}$ be generated by \eqref{equ:ADMMB}. Suppose that the proximal sequence $\{T_k\}$ satisfies Condition \ref{cond-main}. Then, for given $w^* = (x^*, y^*, \la^*) \in \Omega^*$, we have
\begin{align}\label{ineq:lemma:con12}
&{} 2(w^{k+1} - w^*)^\top D_k (w^{k+1}-w^k)   \nonumber \\
&{} \leq \frac12 \|y^{k-1} - y^k\|_{\Gamma_{k-1}}^2 - \frac12 \|y^{k+1} - y^k\|_{\Gamma_{k}}^2 - \|y^{k+1} - y^k\|_{\Lambda_k}^2 + \left(\frac14 + \frac12 c\right) \beta\|By^{k+1} - By^k\|^2 \nonumber \\
&{} \;\;\;\; +\frac{(1-c)} {\beta} \|\lambda^{k+1} - \lambda^k\|^2 - 2\beta (By^{k+1} - By^*)^\top (By^{k+1} - By^k ) - 2\|u^{k+1} - u^*\|^2_{\Sigma}.   \qedhere
\end{align}
\end{lemma}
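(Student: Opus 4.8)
The plan is to obtain \eqref{ineq:lemma:con12} by starting from the inequality \eqref{ineq:lemma:con11} of Lemma \ref{lemma:con11} and replacing the single crossing term $2(By^{k+1} - By^k)^\top (\lambda^k - \lambda^{k+1})$ by a combination of the two bounds established in Lemmas \ref{lemma:con2} and \ref{lemma:con22}. Every other term on the right-hand side of \eqref{ineq:lemma:con11}, namely $-2\beta (By^{k+1} - By^*)^\top (By^{k+1} - By^k)$ and $-2\|u^{k+1} - u^*\|^2_{\Sigma}$, already appears verbatim in the target \eqref{ineq:lemma:con12}, so these can be carried through unchanged.

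The one genuine idea is how to split the coefficient. First I would write the factor $2$ as $1+1$, producing two identical copies of $(By^{k+1} - By^k)^\top (\lambda^k - \lambda^{k+1})$. To the first copy I would apply Lemma \ref{lemma:con2}, which (under Condition \ref{cond-main}, needed here to justify invoking that lemma) bounds it by the telescoping quantity $\frac12 \|y^{k-1} - y^k\|_{\Gamma_{k-1}}^2 - \frac12 \|y^{k+1} - y^k\|_{\Gamma_{k}}^2 - \|y^{k+1} - y^k\|_{\Lambda_k}^2$. To the second copy I would apply Lemma \ref{lemma:con22}, valid for any $c \in (0,0.5)$, which bounds it by $\left(\frac14 + \frac12 c\right)\beta\|By^{k+1} - By^k\|^2 + (1-c)\frac1\beta\|\lambda^{k+1} - \lambda^k\|^2$. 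Adding these two bounds gives exactly the five new terms on the right-hand side of \eqref{ineq:lemma:con12}, and substituting the sum back into \eqref{ineq:lemma:con11} yields the claim.

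I do not anticipate a real obstacle: once the $2 = 1+1$ splitting is recognized, the proof is a one-line substitution. The only points requiring care are bookkeeping ones, namely checking that the coefficients match (Lemma \ref{lemma:con2} contributes its full weight to the first unit copy and Lemma \ref{lemma:con22} its full weight to the second, so no extra factor of $\frac12$ is introduced) and that the hypotheses of both lemmas are in force, which they are since Lemma \ref{lemma:con12} itself assumes $\{T_k\}$ satisfies Condition \ref{cond-main} and fixes the same constant $c \in (0,0.5)$. The conceptual motivation behind the split, which is worth noting but not needed for the proof, is that the two bounds serve complementary roles downstream: the first supplies the telescoping $\Gamma_k$-terms that will drive a Lyapunov-type monotonicity argument, while the second supplies the $\|By^{k+1}-By^k\|^2$ and $\|\lambda^{k+1}-\lambda^k\|^2$ terms that are later combined with $-2\beta(By^{k+1}-By^*)^\top(By^{k+1}-By^k)$ to control the residuals.
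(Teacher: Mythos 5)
Your proposal is correct and coincides with the paper's own (very terse) proof: the paper likewise starts from \eqref{ineq:lemma:con11} and bounds the crossing term $2(By^{k+1}-By^k)^\top(\lambda^k-\lambda^{k+1})$ by applying Lemma \ref{lemma:con2} to one unit copy and Lemma \ref{lemma:con22} to the other, exactly your $2=1+1$ split, with the remaining terms carried through unchanged.
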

\begin{proof}
The term $2(By^{k+1} - By^k)^\top (\lambda^k - \lambda^{k+1})$ in inequality \eqref{ineq:lemma:con11} can be bounded by the above lemmas \eqref{ineq:lemma:con2} and \eqref{ineq:lemma:con22}, and then the assertion is obtained.
\end{proof}

\subsection{Global Convergence of the variable metric indefinite proximal ADMM\label{sub:con}}
In this subsection we show the global convergence based on the results in the previous subsection and Condition \ref{cond-main}. Firstly, we obtain the following contractive result, which will play a key role in proving the convergence of \eqref{equ:ADMMB}.
\begin{lemma}
\label{lemma:con3}
Let $w^* = (x^*, y^*, \la^*) \in \Omega^*$, and let $\{w^k\}$ be generated by the scheme \eqref{equ:ADMMB}. Suppose that the proximal sequence $\{T_k\}$ satisfies Condition \ref{cond-main}. Then we have
\begin{align}\label{equ:lemma:con3}
&\|w^k - w^*\|_{G_k}^2  + \frac12\|y^{k-1} - y^k\|_{\Gamma_{k-1}}^2 - \left( \|w^{k+1} - w^*\|_{G_k}^2 + \frac12 \|y^{k+1} - y^k\|_{\Gamma_{k}}^2 \right)  \nonumber \\
&{}\geq \underbrace{\|x^{k+1} - x^k\|_{S+\frac12 \Sigma_f}^2  + \|y^{k+1} - y^k\|_{\Delta_k}^2 + \frac c \beta  \|\lambda^{k+1} - \lambda^k\|^2}_{\mathrm{Term1}},
\end{align}
where $\Gamma_{k}$ and $\Delta_k$ are given in \eqref{def-t}.
\end{lemma}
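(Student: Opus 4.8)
The plan is to turn the one-sided estimate of Lemma~\ref{lemma:con12} into a telescoping ``energy-decrease'' inequality in the $G_k$-norm by recognizing both the left-hand cross term and the $B$-dependent cross term on the right as perfect squares. First I would apply the polarization identity
$$2(w^{k+1}-w^*)^\top D_k(w^{k+1}-w^k) = \|w^{k+1}-w^*\|_{D_k}^2 - \|w^k-w^*\|_{D_k}^2 + \|w^{k+1}-w^k\|_{D_k}^2,$$
which is valid because $D_k$ is symmetric, to the left-hand side of \eqref{ineq:lemma:con12}. In the same spirit I would expand the remaining cross term appearing on the right via
$$-2\beta(By^{k+1}-By^*)^\top(By^{k+1}-By^k) = -\|y^{k+1}-y^*\|_{\beta B^\top B}^2 + \|y^k-y^*\|_{\beta B^\top B}^2 - \|y^{k+1}-y^k\|_{\beta B^\top B}^2,$$
since this is precisely the mechanism that injects the $\beta B^\top B$ block needed to upgrade $D_k$ to $G_k$.

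Next I would pass from $D_k$ to $G_k$ using $G_k = D_k + \mathrm{diag}(\Sigma_f,\,\Sigma_g+\beta B^\top B,\,0)$, so that $\|w-w^*\|_{G_k}^2 = \|w-w^*\|_{D_k}^2 + \|u-u^*\|_\Sigma^2 + \|y-y^*\|_{\beta B^\top B}^2$. Transposing the terms $\|y^{k+1}-y^*\|_{\beta B^\top B}^2$, $\|y^k-y^*\|_{\beta B^\top B}^2$ and $2\|u^{k+1}-u^*\|_\Sigma^2$ to the left collapses the $(k+1)$- and $k$-blocks into $\|w^{k+1}-w^*\|_{G_k}^2$ and $-\|w^k-w^*\|_{G_k}^2$, leaving a nonnegative surplus $\|u^{k+1}-u^*\|_\Sigma^2 + \|u^k-u^*\|_\Sigma^2$. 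I would lower-bound this surplus by $\tfrac12\|u^{k+1}-u^k\|_\Sigma^2 = \tfrac12\|x^{k+1}-x^k\|_{\Sigma_f}^2 + \tfrac12\|y^{k+1}-y^k\|_{\Sigma_g}^2$, using $\|a\|^2+\|b\|^2\ge\tfrac12\|a-b\|^2$; combined with the $S$- and $T_k$-blocks extracted from $\|w^{k+1}-w^k\|_{D_k}^2$, this is exactly what manufactures the weight $S+\tfrac12\Sigma_f$ on $\|x^{k+1}-x^k\|^2$ and the auxiliary $\tfrac12\Sigma_g$ that later merges into $\Delta_k$.

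Finally I would collect the residual $y^{k+1}-y^k$ and $\lambda^{k+1}-\lambda^k$ contributions. Grouping the $T_k$-, $\Sigma_g$-, $\Lambda_k$- and $\beta B^\top B$-weights yields the single matrix $T_k + \tfrac32\Sigma_g - \tfrac{\gamma_{k-1}}2 T_+^k - 2T_- + (\tfrac34-\tfrac12 c)\beta B^\top B = \Delta_k$ by \eqref{def-t3} (here one uses $\beta B^\top B - (\tfrac14+\tfrac12 c)\beta B^\top B = (\tfrac34-\tfrac12 c)\beta B^\top B$ together with $\tfrac12\Sigma_g+\Sigma_g=\tfrac32\Sigma_g$, the second $\Sigma_g$ being the one hidden inside $\Lambda_k$ via \eqref{def-t2}), while the $\lambda$-coefficient reduces to $\tfrac1\beta-\tfrac{1-c}\beta=\tfrac c\beta$. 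The $\Gamma_{k-1}$ and $\Gamma_k$ terms from Lemma~\ref{lemma:con12} are already in telescoping form, so after moving the $G_k$-energies and the $\Gamma$-terms to the left the claimed inequality \eqref{equ:lemma:con3} drops out; in fact the algebraic reduction of $\mathrm{Term1}$ is an identity, the only genuine slack being the discarded nonnegative quantity $\tfrac12\|u^{k+1}+u^k-2u^*\|_\Sigma^2$.

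The main obstacle I anticipate is not conceptual but purely one of bookkeeping: keeping the factor of two on the $\Sigma$-terms straight so that exactly $\tfrac12\Sigma_f$ lands in the $x$-weight and exactly $\tfrac32\Sigma_g$ (after absorbing the $\Sigma_g$ carried inside $\Lambda_k$) lands in $\Delta_k$, and making sure the two separate square-completions feed the correct $\beta B^\top B$ contributions to $G_k$ on one hand and to $\Delta_k$ on the other. No further appeal to Condition~\ref{cond-main} is required at this step beyond what is already absorbed into Lemma~\ref{lemma:con12} and the definitions \eqref{def-t}; positivity of $\Delta_k$ (Condition~(g)) is only needed later to exploit $\mathrm{Term1}\ge 0$.
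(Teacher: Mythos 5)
Your proposal is correct and follows essentially the same route as the paper's proof: the same polarization identities (the paper writes them as $\|a+b\|^2=\|a\|^2-\|b\|^2+2(a+b)^\top b$ applied to $\|w^{k+1}-w^*\|_{D_k}^2$ and $\beta\|By^{k+1}-By^*\|^2$), the same substitution of Lemma \ref{lemma:con12}, the same lower bound $\|u^k-u^*\|_\Sigma^2+\|u^{k+1}-u^*\|_\Sigma^2\geq\tfrac12\|u^{k+1}-u^k\|_\Sigma^2$, and the same final identification $\Delta_k=T_k+\tfrac12\Sigma_g+\Lambda_k+(\tfrac34-\tfrac12 c)\beta B^\top B$. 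Your bookkeeping of where the $\beta B^\top B$ and $\Sigma$ blocks land, and your observation that the only genuine slack is the discarded $\tfrac12\|u^{k+1}+u^k-2u^*\|_\Sigma^2$, are both accurate.
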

\begin{proof}
By the identity $ \|a+b\|^2 = \|a\|^2 - \|b\|^2 + 2(a+b)^\top b$, we get
\begin{align*}
\|By^{k+1} - By^*\|^2 & = \|By^k - By^* + By^{k+1} - By^k\|^2 \nn \\
& = \|By^k - By^*\|^2 - \|By^{k+1} - By^k\|^2 + 2(By^{k+1} - By^*)^\top ( By^{k+1} - By^k).
\end{align*}
Moreover,
\begin{align*}
\|w^{k+1} - w^*\|_{D_k}^2 & = \|w^k - w^* + w^{k+1} - w^k\|_{D_k}^2 \nn \\
& = \|w^k - w^*\|_{D_k}^2 - \|w^{k+1} - w^k\|_{D_k}^2 + 2(w^{k+1} - w^*)^\top D_k ( w^{k+1} - w^k).
\end{align*}
Then we have
\begin{align}\label{lemma:con3:ineq1}
&{} \|w^{k+1} - w^*\|_{D_k}^2  +  \beta \|By^{k+1} - By^*\|^2  \nonumber \\
&{} = \|w^{k} - w^*\|_{D_k}^2  +  \beta \|By^{k} - By^*\|^2 - \left( \|w^{k+1} - w^k\|_{D_k}^2  +  \beta \|By^{k+1} - By^k\|^2 \right) \nonumber \\
&{} \;\;\;\; + 2(w^{k+1} - w^*)^\top D_k (w^{k+1}-w^k) + 2\beta (By^{k+1} - By^*)^\top (By^{k+1} - By^k).  
\end{align}
Since the term $ 2(w^{k+1} - w^*)^\top D_k (w^{k+1}-w^k)$ in equality \eqref{lemma:con3:ineq1} can be bounded by \eqref{ineq:lemma:con12} in Lemma \ref{lemma:con12}, we can rearrange \eqref{lemma:con3:ineq1} as
\begin{align}\label{lemma:con3:ineq3}
&{} \|w^{k+1} - w^*\|_{D_k}^2  +  \beta \|By^{k+1} - By^*\|^2  \nonumber \\
&{} \leq \|w^{k} - w^*\|_{D_k}^2  +  \beta \|By^{k} - By^*\|^2 - \|w^{k+1} - w^k\|_{D_k}^2  -  \beta \|By^{k+1} - By^k\|^2  \nonumber \\
&{} \;\;\;\; + \frac12 \|y^{k-1} - y^k\|_{\Gamma_{k-1}}^2 - \frac12 \|y^{k+1} - y^k\|_{\Gamma_{k}}^2 - \|y^{k+1} - y^k\|_{\Lambda_k}^2   \nonumber \\
&{} \;\;\;\; + \left(\frac14 + \frac12 c\right) \beta\|By^{k+1} - By^k\|^2 + (1-c)\frac1 \beta \|\lambda^{k+1} - \lambda^k\|^2 - 2\|u^{k+1} - u^* \|_{\Sigma}^2  \nonumber \\
&{} = \|w^{k} - w^*\|_{D_k}^2  +  \beta \|By^{k} - By^*\|^2 & \nn \\
&{} \;\;\;\; - \|u^{k+1} - u^k\|_{P_k}^2  - \left(\frac34 - \frac12 c\right)\beta \|By^{k+1} - By^k\|^2 - \frac c \beta \|\lambda^{k+1} - \lambda^k\|^2 \nonumber \\
&{} \;\;\;\; + \frac12 \|y^{k-1} - y^k\|_{\Gamma_{k-1}}^2 - \frac12 \|y^{k+1} - y^k\|_{\Gamma_{k}}^2 - \|y^{k+1} - y^k\|_{\Lambda_k}^2 - 2\|u^{k+1} - u^* \|_{\Sigma}^2,
\end{align}
where the last equality follows from the definitions of $P_k$ and $D_k$ in \eqref{def-matrix}.
Rearranging \eqref{lemma:con3:ineq3} further, we have
\begin{align*}
&{} \|w^{k+1} - w^*\|_{D_k}^2  +  \beta \|By^{k+1} - By^*\|^2 + \frac12 \|y^{k+1} - y^k\|_{\Gamma_{k}}^2 \nn \\
&{} \leq \|w^{k} - w^*\|_{D_k}^2  +  \beta \|By^{k} - By^*\|^2 + \frac12 \|y^{k-1} - y^k\|_{\Gamma_{k-1}}^2   - 2\|u^{k+1} - u^* \|_{\Sigma}^2 \nn \\
&{} \;\;\;\; - \left(\|u^{k+1} - u^k\|_{P_k}^2 + \frac c \beta \|\lambda^{k+1} - \lambda^k\|^2 + \|y^{k+1} - y^k\|_{\Lambda_k + (\frac34 - \frac12 c)\beta B^\top B}^2\right),
\end{align*}
that is,
\begin{align}\label{lemma:con3:ineq4}
&{} \|w^{k} - w^*\|_{D_k}^2  +  \beta \|By^{k} - By^*\|^2 + \frac12 \|y^{k-1} - y^k\|_{\Gamma_{k-1}}^2 + \|u^k - u^*\|_{\Sigma}^2  \nn \\
&{} \;\;\;\; - \left(\|w^{k+1} - w^*\|_{D_k}^2  +  \beta \|By^{k+1} - By^*\|^2 + \frac12 \|y^{k+1} - y^k\|_{\Gamma_{k}}^2 + \|u^{k+1} - u^*\|_{\Sigma}^2 \right)  \nn \\
&{} \geq \|u^{k+1} - u^k\|_{P_k}^2 + \frac c \beta \|\lambda^{k+1} - \lambda^k\|^2 + \|y^{k+1} - y^k\|_{\Lambda_k + (\frac34 - \frac12 c)\beta B^\top B}^2 \nn \\
&{} \;\;\;\; + \|u^k - u^*\|_{\Sigma}^2 - \|u^{k+1} - u^*\|_{\Sigma}^2 + 2\|u^{k+1} - u^* \|_{\Sigma}^2.
\end{align}
From the definition of $G_k$ in \eqref{def-matrix}, inequality \eqref{lemma:con3:ineq4} can be written as
\begin{align*}
&\|w^k - w^*\|_{G_k}^2  + \frac12\|y^{k-1} - y^k\|_{\Gamma_{k-1}}^2 - \left( \|w^{k+1} - w^*\|_{G_k}^2 + \frac12 \|y^{k+1} - y^k\|_{\Gamma_{k}}^2 \right)  \nonumber \\
&{} \geq \|u^{k+1} - u^k\|_{P_k}^2 + \frac c \beta \|\lambda^{k+1} - \lambda^k\|^2 + \|y^{k+1} - y^k\|_{\Lambda_k + (\frac34 - \frac12 c)\beta B^\top B}^2 + \|u^k - u^*\|_{\Sigma}^2 + \|u^{k+1} - u^* \|_{\Sigma}^2  \nonumber \\
&{} \geq \|u^{k+1} - u^k\|_{P_k}^2 + \frac c \beta \|\lambda^{k+1} - \lambda^k\|^2 + \|y^{k+1} - y^k\|_{\Lambda_k + (\frac34 - \frac12 c)\beta B^\top B}^2 + \frac12 \|u^{k+1} - u^k\|_{\Sigma}^2  \nonumber \\
&{} = \|x^{k+1} - x^k\|_{S+\frac12 \Sigma_f}^2  + \|y^{k+1} - y^k\|_{T_k + \Lambda_k + (\frac34 - \frac12 c)\beta B^\top B + \frac12 \Sigma_g}^2 + \frac c \beta  \|\lambda^{k+1} - \lambda^k\|^2,
\end{align*}
where the second inequality follows from the well-known inequality $\|a\|_M^2 + \|b\|_M^2 \geq \frac 12 \|a-b\|_M^2$ with $M = \Sigma$, $a = u^k - u^*$ and $b = u^{k+1} - u^*$.

From the definitions \eqref{def-t2} and \eqref{def-t3}, we have that
$$ \Delta_k = T_k + \frac12 \Sigma_g + \Lambda_k + (\frac34 - \frac12 c)\beta B^\top B.$$
Thus the proof is completed.
\end{proof}
Condition \ref{cond-main} (a) implies $\|x^{k+1} - x^k\|_{S+\frac12 \Sigma_f}^2 \geq 0$ for all $k$. Moreover,
Condition \ref{cond-main} (g) implies $\|y^{k+1} - y^k\|_{\Delta_k}^2 \geq 0$ for all $k$. Therefore, Term1 in \eqref{equ:lemma:con3} is always nonnegative, which indicates the contraction of the sequence $\{w_k\}$.

\medskip
It follows from the definition of $\{G_k\}$ and Condition \ref{cond-main} (a), (c) and (e) that $0 \preceq G_{k+1} \preceq (1+\gamma_k) G_k$ for all $k$.
We define two constants $C_s$ and $C_p$ as follows:
\begin{equation*}
C_s\colon = \sum_{k=0}^{\infty} \gamma_k  \; \; \mathrm{and} \;\;  C_p\colon = \prod_{k=0}^{\infty} (1+\gamma_k).
\end{equation*}
From the assumption $\sum_{0}^{\infty} \gamma_k < \infty$ and $\gamma_k \geq 0$, we have $0 \leq C_s < \infty$ and $1 \leq C_p < \infty$. Moreover, we can easily get
$$ 0 \preceq G_{k} \preceq C_p G_0, \; \; \forall k \geq 0,$$
which means that the sequences $\{G_k\}$ is bounded.

\medskip
Now we give the main convergent theorem of this subsection.
\begin{theorem}
\label{theo:conv}
Let $w^* = (x^*, y^*, \la^*) \in \Omega^*$, and let $\{w^k\}$ be a sequence generated by \eqref{equ:ADMMB}. Suppose that $\{T_k\}$ is a sequence satisfying Condition \ref{cond-main}. Then the sequence $\{w^k\}$ converges to a point $w^* \in \Omega^*$.
\end{theorem}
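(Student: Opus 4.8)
The plan is to turn the one-step estimate of Lemma~\ref{lemma:con3} into a quasi-Fej\'er (almost-monotone) property and then run the standard ADMM argument: summability of the per-iteration decrease, boundedness, a cluster point that is a KKT pair, and finally convergence of the whole sequence. Put $\psi_k := \|w^k - w^*\|_{G_k}^2 + \tfrac12\|y^{k-1}-y^k\|_{\Gamma_{k-1}}^2 \ge 0$, so that Lemma~\ref{lemma:con3} reads $\psi_k - \big(\|w^{k+1}-w^*\|_{G_k}^2 + \tfrac12\|y^{k+1}-y^k\|_{\Gamma_k}^2\big) \ge \mathrm{Term1}_k \ge 0$. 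Since $G_{k+1}\preceq(1+\gamma_k)G_k$ and $1+\gamma_k\ge1$, I would deduce $\psi_{k+1}\le(1+\gamma_k)\big(\psi_k-\mathrm{Term1}_k\big)$. Normalizing by $\chi_k:=\prod_{j=0}^{k-1}(1+\gamma_j)\in[1,C_p]$ and setting $\hat\psi_k:=\psi_k/\chi_k$ gives the genuine recursion $\hat\psi_{k+1}\le\hat\psi_k-\mathrm{Term1}_k/\chi_k$; hence $\{\hat\psi_k\}$ is nonincreasing and bounded below, so it converges, $\{\psi_k\}$ is bounded, and $\sum_k\mathrm{Term1}_k\le C_p\hat\psi_0<\infty$.

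Summability forces $\mathrm{Term1}_k\to0$, i.e. $\|x^{k+1}-x^k\|_{S+\frac12\Sigma_f}\to0$, $\|y^{k+1}-y^k\|_{\Delta_k}\to0$, and $\|\lambda^{k+1}-\lambda^k\|\to0$; the last limit with \eqref{equ:ADB3} yields the vanishing primal residual $Ax^{k+1}+By^{k+1}-b\to0$. Boundedness of $\{\psi_k\}$ bounds $\{\lambda^k\}$ (its $G_k$-block is $\tfrac1\beta I$) and, through Conditions~\ref{cond-main}(d) and (b), the sequences $\{y^k\}$ and $\{x^k\}$ (the latter after using the $\lambda$-update to bound $\{Ax^k\}$ and then $S+\Sigma_f+\beta A^\top A\succ0$). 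Thus $\{w^k\}$ is bounded and has a subsequence $w^{k_j}\to w^\infty$.

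Next I would pass to the limit along $\{k_j\}$ in the feasibility relation and in the subproblem optimality conditions \eqref{x-opt-s}, \eqref{y-opt-s}: the feasibility limit gives \eqref{kktla}, and closedness of the graphs of $\partial f$ and $\partial g$ turns the limiting inequalities into \eqref{kktx}, \eqref{kkty} and \eqref{kktxi}, so $w^\infty\in\Omega^*$. This is the step I expect to be the main obstacle, because it needs the perturbation terms $S(x^{k+1}-x^k)$, $T_k(y^{k+1}-y^k)$ and $\beta A^\top B(y^k-y^{k+1})$ to tend to zero, whereas $\mathrm{Term1}$ only controls successive differences in the possibly degenerate seminorms $\|\cdot\|_{S+\frac12\Sigma_f}$ and $\|\cdot\|_{\Delta_k}$ under a metric that changes with $k$. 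I would resolve it by extracting $\|B(y^{k+1}-y^k)\|\to0$ and $\|S(x^{k+1}-x^k)\|\to0$ from these limits, exploiting the $\beta B^\top B$ content retained in $\Delta_k$, the fixed matrix $T_-$, and the uniform bound on $\{T_+^k\}$ coming from Condition~\ref{cond-main}(e); the same uniformity is what underpins the boundedness claims used above.

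Finally, to upgrade the cluster point to the limit of the whole sequence, I note that Lemma~\ref{lemma:con3} is valid for \emph{every} element of $\Omega^*$. Taking $w^*=w^\infty$, the associated normalized potential $\hat\psi_k$ is nonincreasing and has the subsequence $\hat\psi_{k_j}\to0$, hence $\hat\psi_k\to0$ and therefore $\psi_k\to0$. Because the $y$- and $\lambda$-blocks of $G_k$ are (uniformly) positive definite by the definition of $G_k$ together with Condition~\ref{cond-main}(d), this forces $y^k\to y^\infty$ and $\lambda^k\to\lambda^\infty$, and then $x^k\to x^\infty$ as in the boundedness step, i.e. $w^k\to w^\infty\in\Omega^*$, completing the proof.
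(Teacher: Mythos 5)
Your argument through boundedness and summability is the paper's own proof in a cleaner wrapper. Your $\psi_k$ is exactly the Lyapunov quantity of Lemma \ref{lemma:con3}, your inequality $\psi_{k+1}\le(1+\gamma_k)\bigl(\psi_k-\mathrm{Term1}_k\bigr)$ is inequality \eqref{theo:conv:1}, and dividing by $\chi_k=\prod_{j=0}^{k-1}(1+\gamma_j)$ is just a tidier way of doing the product/summation bookkeeping the paper carries out with $C_p$ and $C_s$ in \eqref{theo:conv:2} and the summation that follows it; the subsequent deductions (boundedness of $\{y^k\},\{\lambda^k\}$ via Condition \ref{cond-main} (d), $\lambda^{k+1}-\lambda^k\to0$ hence vanishing residual \eqref{theo:lim}, boundedness of $\{Ax^k\}$, then of $\{x^k\}$ via Condition \ref{cond-main} (b)) are identical to the paper's. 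So for everything the paper proves explicitly, your proposal is correct and takes the same approach.

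The divergence is in the tail, and that is where the real difficulty sits. The paper does not actually prove that cluster points are KKT points, nor uniqueness of the cluster point: it defers both to \cite{Gu2019bfgs}. You attempt to sketch these steps, which is more than the paper writes out, but the sketch fails at precisely the obstacle you yourself identify. Your proposed resolution -- extracting $\|B(y^{k+1}-y^k)\|\to0$ from ``the $\beta B^\top B$ content retained in $\Delta_k$'' -- does not follow from Condition \ref{cond-main} (g) as stated: (g) only guarantees $\Delta_k\succeq0$, and $\Delta_k$ may annihilate directions in which $B^\top B$ is positive definite, so $\|y^{k+1}-y^k\|_{\Delta_k}\to0$ by itself says nothing about $B(y^{k+1}-y^k)$. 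There is a clean repair, but it must be stated: Lemma \ref{lemma:con22}, hence Lemma \ref{lemma:con3}, is valid for \emph{every} $c'\in(0,0.5)$, so one can rerun Lemma \ref{lemma:con3} with some $c'<c$ (the $c$ of Condition (g)); this replaces $\Delta_k$ by $\Delta_k+\frac{c-c'}{2}\beta B^\top B\succeq\frac{c-c'}{2}\beta B^\top B$, and summability of $\mathrm{Term1}$ then really does give $B(y^{k+1}-y^k)\to0$. Beyond this, several of your remaining claims need work that neither you nor the paper supplies: $\mathrm{Term1}$ controls $x$-increments only in the seminorm $\|\cdot\|_{S+\frac12\Sigma_f}$, which yields $(S+\tfrac12\Sigma_f)(x^{k+1}-x^k)\to0$ but not $S(x^{k+1}-x^k)\to0$ when $S$ is only $-\tfrac12\Sigma_f$-bounded; it gives no control on $T_k(y^{k+1}-y^k)$, since $T_k$ and $\Delta_k$ differ by an indefinite combination; your final Fej\'er step needs $\|y^{k_j-1}-y^{k_j}\|_{\Gamma_{k_j-1}}\to0$ along the subsequence, which is not implied by $\|y^{k+1}-y^k\|_{\Delta_k}\to0$ because $\Gamma_k$ and $\Delta_k$ are unrelated seminorms; and your parenthetical ``(uniformly) positive definite'' for the $y$-block of $G_k$ is not a consequence of Condition (d), which is pointwise in $k$ (the paper makes the same silent leap when it concludes boundedness of $\{y^k\}$ from \eqref{theo:conv:2}). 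In short: up to boundedness your proof matches the paper; past that point, both you and the paper are incomplete, and the specific mechanism you propose for closing the gap is wrong as written, though fixable by the $c'$-trick above.
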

\begin{proof}
First we show that the sequence $\{w^k\}$ is bounded. Since $0 \preceq G_{k+1} \preceq (1+\gamma_k) G_k$,
we have
\[\label{theo:2:2}
\|w^{k+1} - w^*\|_{G_{k+1}}^2 \leq (1+\gamma_k)\|w^{k+1} - w^*\|_{G_k}^2.
\]
Combining the inequality \eqref{theo:2:2} with \eqref{equ:lemma:con3} in Lemma \ref{lemma:con3}, we have
\begin{align}\label{theo:conv:1}
&{} \|w^{k+1} - w^*\|_{G_{k+1}}^2 + \frac12 \|y^{k+1} - y^k\|_{\Gamma_k}^2 \nn \\
&{} \overset{(\ref{theo:2:2})}{\leq}(1+\gamma_k)\left(\|w^{k+1} - w^*\|_{G_{k}}^2 + \frac12 \|y^{k+1} - y^k\|_{\Gamma_k}^2\right) \nn \\
&{} \overset{(\ref{equ:lemma:con3})}{\leq} (1+\gamma_k)\left(\|w^k - w^*\|_{G_k}^2  + \frac12\|y^{k-1} - y^k\|_{\Gamma_{k-1}}^2\right) - (1+\gamma_k) \mathrm{Term1} \nn \\
&{}\;\; \leq \;\; (1+\gamma_k)\left(\|w^k - w^*\|_{G_k}^2  + \frac12\|y^{k-1} - y^k\|_{\Gamma_{k-1}}^2\right) - \mathrm{Term1}.
\end{align}
It then follows that for all $k$,
\begin{align}\label{theo:conv:2}
 \|w^{k+1} - w^*\|_{G_{k+1}}^2 + \frac12 \|y^{k+1} - y^k\|_{\Gamma_k}^2  &{}\leq  \left(\prod_{i=0}^{k} (1+\gamma_i)\right) \left(\|w^0 - w^*\|_{G_0}^2 + \frac12\|y^{0} - y^1\|_{\Gamma_0}^2 \right) \nn \\
&{} \leq C_p \left(\|w^0 - w^*\|_{G_0}^2 + \frac12\|y^{0} - y^1\|_{\Gamma_0}^2 \right).
\end{align}
Note that
\[\label{theo:conv:3}
\|w^{k+1} - w^*\|_{G_{k+1}}^2 = \|x^{k+1} - x^*\|_{S+\Sigma_f}^2 + \|y^{k+1} - y^*\|_{T_k + \Sigma_g + \beta B^\top B}^2 + \frac 1 \beta \|\lambda^{k+1} - \lambda^*\|^2,\]
$T_k + \Sigma_g + \beta B^\top B$ is positive definite from Condition \ref{cond-main} (d), and $C_p \left(\|w^0 - w^*\|_{G_0}^2 + \frac12\|y^{0} - y^1\|_{\Gamma_0}^2 \right)$ is a constant. It then follows from \eqref{theo:conv:2} that $\{y^k\}$ and $\{\la^k\}$ are bounded. We now show that $\{x^k\}$ is also bounded.

From \eqref{theo:conv:1} and \eqref{theo:conv:2}, we have
\begin{align*}
\mathrm{Term1} &{} = \|x^{k+1} - x^k\|_{S+\frac12 \Sigma_f}^2  + \|y^{k+1} - y^k\|_{\Delta_k}^2 + \frac c \beta  \|\lambda^{k+1} - \lambda^k\|^2 \nn \\
&{}\leq \|w^k - w^*\|_{G_k}^2 - \|w^{k+1} - w^*\|_{G_{k+1}}^2 + \frac12\|y^{k-1} - y^k\|_{\Gamma_{k-1}}^2- \frac12 \|y^{k+1} - y^k\|_{\Gamma_k}^2 \nn \\
&{} \;\;\;\; + \gamma_k \left(\|w^k - w^*\|_{G_k}^2  + \frac12\|y^{k-1} - y^k\|_{\Gamma_{k-1}}^2\right) \nn \\
&{}\leq \|w^k - w^*\|_{G_k}^2 - \|w^{k+1} - w^*\|_{G_{k+1}}^2 + \frac12\|y^{k-1} - y^k\|_{\Gamma_{k-1}}^2 - \frac12 \|y^{k+1} - y^k\|_{\Gamma_k}^2 \nn \\
&{} \;\;\;\; + C_p \left(\|w^0 - w^*\|_{G_0}^2 + \frac12\|y^{0} - y^1\|_{\Gamma_0}^2 \right).
\end{align*}
Summing up the inequalities, we obtain
\begin{align*}
&{}\sum_{k=1}^{\infty} \left( \|x^{k+1} - x^k\|_{S+\frac12 \Sigma_f}^2  + \|y^{k+1} - y^k\|_{\Delta_k}^2 + \frac c \beta  \|\lambda^{k+1} - \lambda^k\|^2\right) \nn \\
&{} \leq{} \|w^0 - w^*\|_{G_0}^2 + \frac12\|y^{0} - y^1\|_{\Gamma_0}^2 + \left(\sum_{k=0}^{\infty}\gamma_k\right) C_p \left(\|w^0 - w^*\|_{G_0}^2 + \frac12\|y^{0} - y^1\|_{\Gamma_0}^2 \right) \nn \\
&{} \leq{}  (1+ C_s C_p) \left(\|w^0 - w^*\|_{G_0}^2 + \frac12\|y^{0} - y^1\|_{\Gamma_0}^2 \right).
\end{align*}
Since $(1+ C_s C_p) \left(\|w^0 - w^*\|_{G_0}^2 + \frac12\|y^{0} - y^1\|_{\Gamma_0}^2 \right)$ is a finite constant, we have
\begin{equation*}
\lim_{k\rightarrow \infty} \|x^{k+1} - x^k\|_{S+\frac12 \Sigma_f}^2  + \|y^{k+1} - y^k\|_{\Delta_k}^2 + \frac c \beta  \|\lambda^{k+1} - \lambda^k\|^2 = 0,
\end{equation*}
which indicates that
\[\label{theo:lim} \lim_{k\rightarrow \infty} \|\lambda^{k+1} - \lambda^k\| = \lim_{k\rightarrow \infty} \beta \|Ax^{k+1}+ By^{k+1} - b\| = 0. \]
Note that $Ax^* + By^*-b =0$, and
$$\|Ax^{k+1} - Ax^*\| = \|Ax^{k+1} + By^{k+1} - b - B(y^{k+1} - y^k)\| \leq \|Ax^{k+1}+ By^{k+1} - b\| + \|B(y^{k+1} - y^k)\|.$$
It then follows from \eqref{theo:lim} that $\|A(x^{k+1} - x^*)\|$ is bounded.
Moreover, inequalities \eqref{theo:conv:2} and \eqref{theo:conv:3} imply $\|x^{k+1} - x^*\|_{S+\Sigma_f}^2 $ is bounded. Therefore $\|x^{k+1} - x^*\|_{S+\Sigma_f+\beta A^\top A }^2 $ is abounded since
$$\|x^{k+1} - x^*\|_{S + \Sigma_f + \beta A^\top A}^2 = \|x^{k+1} - x^*\|_{S+\Sigma_f}^2 + \beta\|A(x^{k+1} - x^*)\|^2.$$
From the positive definiteness of $S + \Sigma_f + \beta A^\top A$ in Condition \ref{cond-main} (b), it shows that $\{x^k\}$ is also bounded.
Consequently, the sequence $\{w^k\}$ is bounded.

Next we should show that any cluster point of the sequence $\{w^k\}$ is an optimal solution of \eqref{cp} and the sequence $\{w^k\}$ has only one cluster point. This can be done in a way similar to the proof of that in \cite{Gu2019bfgs}.
\end{proof}

\section{VMIP-ADMM with the BFGS update}\label{sec:bfgs}
As shown in the recent researches \cite{Gu2019bfgs, Gu2019optimal}, a special variable metric proximal term via the BFGS update can get a solution faster on the iteration and CPU time than the proximal ADMM \cite{fazel2013hankel, he2017optimal} with a fixed proximal matrix $T$.
Moreover, in their experiments, a slightly indefinite variable also performs well without the theoretical analysis.
Note that this choice should have an assumption that the $y$-subproblems \eqref{equ:ADB2} should be unconstrained quadratic programming problem.
Based on the analysis above and the previous studies, we propose indefinite proximal terms $\{T_k\}$ updated by the BFGS update, and show that $\{T_k\}$ satisfies Condition \ref{cond-main}.

\subsection{Construction of the indefinite proximal matrix \texorpdfstring{$T_k$}{TEXT} via the BFGS update}
Inspired by the semidefinite proximal ADMM with the BFGS update \cite{Gu2019bfgs, Gu2019optimal}, we construct the indefinite matrix $T_k$ by the BFGS update.

We first explain the pure BFGS update for the following unconstrained quadratic optimization:
\begin{equation*}
 \min \,  \frac12 x^\top M x,
\end{equation*}
where $M \in \R^{n\times n}$ is a positive definite matrix. Let $s\in \R^n$ and $l = Ms$.
Note that $s^\top l >0$ when $s\neq 0$.
The BFGS update generates a sequence of approximate matrices $\{B_k\}$ of $M$, and its inverse $H_k =B_k^{-1}$.
For a given matrix $B_k$, the BFGS update generates $B_{k+1}^{\mathrm{BFGS}}$ and $H_{k+1}^{\mathrm{BFGS}}$ with $s$ and $l$ as follows
\[\label{bfgsb}
B^{\rm BFGS}_{k+1}=B_{k}+{\frac {{l}{l}^\top}{{l}^\top {s}}}-{\frac {B_{k}{s}{s}^\top B_{k}^\top}{{s}^\top B_{k}{s}}},
\]
\[\label{bfgsh}
H^{\rm BFGS}_{k+1}=\left(I-{\frac {sl^\top}{s^\top l}}\right) H_k \left(I-{\frac {ls^\top}{s^\top l}}\right)+{\frac {ss^\top}{s^\top l}}.
\]
Note that $B^{\rm BFGS}_{k+1}$ and $H^{\rm BFGS}_{k+1}$ are positive definite whenever $B_k, H_k \succ 0$ since $s^\top l > 0$. Note also that $H^{\rm BFGS}_{k+1} l = s = M^{-1} l$.

We now explain how to construct $T_k$ via the BFGS update.
Throughout this section we suppose that $g$ in the objective function \eqref{cp} is a convex quadratic function. Then $y$-subproblems \eqref{equ:ADB2} are unconstrained quadratic programming problems, and the Hessian matrix of the augmented Lagrangian function \eqref{augL} is a constant matrix given as 
\begin{equation*}
M \colon = \nabla^2_{yy} \L_{\beta}(x,y,\la) = \bar{M} + \beta B^\top B, 
\end{equation*}
where $\bar{M} \colon = \nabla^2_{yy} g(y)$.
Note that $M$ is always positive semidefinite since $\bar{M} \succeq 0$.

We consider a perturbed matrix $M^{\delta} \colon = M + \delta I \succ 0$ with a sufficiently small $\delta > 0$, and construct an approximate matrix $B_k$ of $M^{\delta}$ via the BFGS update \eqref{bfgsb}. Let $s_k = x^{k+1} - x^k$, where $\{x^k\}$ is a sequence generated by \eqref{equ:ADMMB}.
We propose that $\{B_k\}$ is generated as
\[\label{cond2}
B_{k+1}=B_{k}+ c_k \left(\frac {{\tilde{l}}_{k}{\tilde{l}}_{k}^\top}{{\tilde{l}}_{k}^\top {s}_{k}} -{\frac {B_{k}{s}_{k}{s}_{k}^\top B_{k}^\top}{{s}_{k}^\top B_{k}{s}_{k}}} \right),
\]
where $\tilde{l}_k = M s_k + \delta s_k = M^{\delta} s_k$, and $\{c_k\}$ is a sequence such that $c_k \in [0,1],$ and $\sum\limits_{k=0}^{\infty} c_k < \infty$.
We can rewrite the update formula \eqref{cond2} as
\begin{equation*}
B_{k+1} = B_k + c_k( B^{\rm BFGS}_{k+1} - B_k),
\end{equation*}
where $B^{\rm BFGS}_{k+1} $ is updated by the pure BFGS update \eqref{bfgsb} with respect to $M^{\delta}$ at every iteration.
Note that $B_{k+1} = B^{\rm BFGS}_{k+1}$ when $c_k = 1$.

We then propose the following construction of $T_k$ via the BFGS update.
\IncMargin{1em}
\begin{algorithm}
    Let $\delta \in (0, \infty)$, $\tau \in (\frac3 4, 1)$ and $B_0 \succeq \tau M$\; 
    Let $c_k$ be a sequence such that $c_k \in [0,1]$ and $\sum\limits_{k=0}^{\infty} c_k < \infty$\;
    If $s_k \neq 0$, then set $\tilde{l}_k = M^{\delta} s_k$ and update $B_{k+1}$ via
    \begin{equation*}
    \vspace{-3mm}
    B_{k+1}=B_{k}+ c_k \left(\frac {{\tilde{l}}_{k}{\tilde{l}}_{k}^\top}{{\tilde{l}}_{k}^\top {s}_{k}} -{\frac {B_{k}{s}_{k}{s}_{k}^\top B_{k}^\top}{{s}_{k}^\top B_{k}{s}_{k}}} \right);
    \end{equation*}\\
    Otherwise $$B_{k+1} = B_k;$$\\
    Construct $T_{k+1}$ as
\begin{equation*}T_{k+1} = B_{k+1} - M.\end{equation*}
\vspace{-3mm}
   \NoCaptionOfAlgo
   \caption{Construction of $T_k$ via the BFGS update\label{construction-t}}
\end{algorithm}

\subsection{Discussion on the Condition \ref{cond-main} for the indefinite matrix \texorpdfstring{$T_k$}{TEXT}}
We now consider matrices $\{T_+^k\}$ and $T_-$ such that
$T_k = T_+^k - T_-, \;\; T_+^k \succeq 0, \;\; T_- \succeq 0$
in Condition \ref{cond-main} (c).
Let
\begin{equation*}
T_+^k = B_k - \tau M \;\; \mathrm{and}\;\; T_- = (1-\tau)M, \;\; \mathrm{with} \;\; \tau \in [0,1).
\end{equation*}
Note that $T_k = T_+^k - T_- = B_k - M$ and $T_- \succeq 0$. Thus we only show that $T_+^k$ is positive semidefinite.

To this end, we give an extension result related to Theorem 2.2 in \cite{Gu2019bfgs}.

\begin{lemma}
\label{theorem:basic2}
Let $M \in \R^{n\times n}$ be a positive definite matrix. Let $s \in \R^n$ such that $s\neq 0$, and let $l=M s$.
If a given matrix $H_k \in \R^{n \times n}$ satisfies $H_k \preceq \tau_1 M^{-1}$ with $\tau_1 \geq 1$, then $H_{k+1}^{\mathrm{BFGS}}$ which is generated by the BFGS update \eqref{bfgsh} with respect to $M$ also satisfies $H_{k+1}^{\mathrm{BFGS}} \preceq \tau_1 M^{-1}$.
\end{lemma}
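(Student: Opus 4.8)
The plan is to establish the desired Loewner inequality directly, exploiting the congruence structure of the BFGS update \eqref{bfgsh} together with the relation $l = Ms$. Writing $V := I - \frac{sl^\top}{s^\top l}$, the update reads $H_{k+1}^{\mathrm{BFGS}} = V H_k V^\top + \frac{ss^\top}{s^\top l}$, and I first record that $s^\top l = s^\top M s > 0$ since $M \succ 0$ and $s \neq 0$, so every denominator is well defined and $\frac{ss^\top}{s^\top l} \succeq 0$.

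First I would use the hypothesis $H_k \preceq \tau_1 M^{-1}$. Since congruence preserves the Loewner order, applying the map $X \mapsto V X V^\top$ to $\tau_1 M^{-1} - H_k \succeq 0$ yields $V H_k V^\top \preceq \tau_1 V M^{-1} V^\top$. Substituting into the update formula gives
\[
\tau_1 M^{-1} - H_{k+1}^{\mathrm{BFGS}} \succeq \tau_1 M^{-1} - \tau_1 V M^{-1} V^\top - \frac{ss^\top}{s^\top l},
\]
so it suffices to show the right-hand side is positive semidefinite.

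The key computation is to simplify $V M^{-1} V^\top$. Expanding the product and substituting $l = Ms$ — so that $M^{-1} l = s$, $l^\top M^{-1} = s^\top$, and $l^\top M^{-1} l = s^\top M s = s^\top l$ — every cross term collapses and one obtains the clean identity $V M^{-1} V^\top = M^{-1} - \frac{ss^\top}{s^\top l}$. Plugging this in, the $M^{-1}$ contributions cancel and the rank-one terms combine to give
\[
\tau_1 M^{-1} - H_{k+1}^{\mathrm{BFGS}} \succeq (\tau_1 - 1)\frac{ss^\top}{s^\top l}.
\]
Because $\tau_1 \geq 1$ and $\frac{ss^\top}{s^\top l} \succeq 0$, the right-hand side is positive semidefinite, which proves $H_{k+1}^{\mathrm{BFGS}} \preceq \tau_1 M^{-1}$.

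I do not anticipate a serious obstacle: the only step requiring care is the algebraic simplification of $V M^{-1} V^\top$, where the substitution $l = Ms$ must be applied consistently to reduce the three correction terms to a single rank-one matrix. The condition $\tau_1 \geq 1$ enters only at the very end, precisely to absorb the leftover $\frac{ss^\top}{s^\top l}$ with the correct sign; this explains why the bound is stated for $\tau_1 \geq 1$ rather than only $\tau_1 = 1$.
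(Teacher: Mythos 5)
Your proof is correct, and it takes a genuinely different route from the paper's. The paper argues pointwise on vectors: it decomposes an arbitrary $v \in \R^n$ as $v = cl + z$ with $s^\top z = 0$ (invoking Lemma 2.1 of the cited BFGS paper for this decomposition), exploits the secant property $H_{k+1}^{\rm BFGS} l = s = M^{-1} l$ together with the vanishing of all cross terms involving $z$ in \eqref{bfgsh}, and then bounds the two pieces $c^2 l^\top M^{-1} l$ and $z^\top H_k z$ separately — the hypothesis $\tau_1 \geq 1$ entering through the first piece. You instead work at the matrix level: writing the update as $H_{k+1}^{\rm BFGS} = V H_k V^\top + \frac{ss^\top}{s^\top l}$ with $V = I - \frac{sl^\top}{s^\top l}$, using congruence-monotonicity of the Loewner order, and reducing everything to the fixed-point identity $V M^{-1} V^\top = M^{-1} - \frac{ss^\top}{s^\top l}$ (i.e., the exact inverse Hessian is invariant under its own BFGS update). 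Your version is more self-contained — it needs no external decomposition lemma — and it yields a quantitative refinement the paper's proof does not state, namely
\begin{equation*}
\tau_1 M^{-1} - H_{k+1}^{\mathrm{BFGS}} \;\succeq\; (\tau_1 - 1)\,\frac{ss^\top}{s^\top l} \;\succeq\; 0,
\end{equation*}
which also isolates exactly why $\tau_1 \geq 1$ is required: the leftover rank-one term must have nonnegative sign. The paper's vectorwise argument, for its part, parallels the proof technique of the earlier semidefinite BFGS-ADMM result it extends, which is presumably why the authors chose it.
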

\begin{proof}
Let $v$ be an arbitrary nonzero vector in $\R^n$, and $\Psi=\{ z\in \R^n\;|\; s^\top z =0\}$.
As shown in \cite[Lemma 2.1]{Gu2019bfgs}, there exist $c \in \R$ and $z\in \Psi$ such that
$v=cl+z$. Together with $H_{k+1}^{\rm BFGS} l =s= M^{-1} l$ and $s^\top z = 0$, we can obtain that for any $\tau_1 \geq 1$,
\begin{eqnarray*}
v^\top H_{k+1}^{\rm BFGS} v &=& (cl+z)^\top H_{k+1}^{\rm BFGS} (cl+z) \\
&=& c^2 l^\top s + 2 cs^\top z + z^\top  H_{k+1}^{\rm BFGS} z \\
&=& c^2 l^\top  M^{-1} l+ z^\top  H_{k+1}^{\rm BFGS} z \\
&=& c^2 l^\top  M^{-1} l+ z^\top H_k z - 2 z^\top \left( \frac{s l^\top}{s^\top l} H_k\right) z + z^\top \left( \frac{s l^\top}{s^\top l} H_k \frac{l s^\top}{s^\top l}\right) z + \frac{z^\top s s^\top z}{s^\top l}  \\
&=& c^2 l^\top   M^{-1} l+ z^\top  H_k z \\
&\leq &c^2 l^\top \tau_1 M^{-1} l+ z^\top \tau_1 M^{-1} z \\
&=& (cl+z)^\top  \tau_1 M^{-1} (cl+z) - 2 \tau_1 cl^\top M^{-1} z \\
&=& v^\top   \tau_1 M^{-1} v,
\end{eqnarray*}
where the forth equality follows from \eqref{bfgsh}, and the inequality follows from the positive definiteness of $M^{-1}$ and the assumption that $H_k \preceq \tau_1 M^{-1}$.
Since $v$ is arbitrary, we have $H_{k+1}^{\rm BFGS} \preceq \tau_1 M^{-1}$.
\end{proof}
Lemma \ref{theorem:basic2} implies that
$B^{\rm BFGS}_{k+1} \succeq \tau M^\delta$ when $B_k \succeq \tau M^\delta$ with $\tau= \frac{1}{\tau_1} \leq 1$, and hence \begin{equation*}
B_{k+1} = (1-c_k) B_k + c_k B^{\rm BFGS}_{k+1} \succeq \tau M^\delta.
\end{equation*}
 That is, if $B_0 \succeq \tau M^\delta$ and $\tau \leq 1$, we have $B_k \succeq \tau M^\delta$ for all $k$, and hence $T_+^k \succeq 0$ for all $k$. When $\tau = 1$, it is reduced to the variable metric semi-proximal ADMM in \cite{Gu2019bfgs}.

For instance, we can choose the initial matrix $B_0$ as
\begin{equation*}
 B_0 = \xi I, \ \mbox{with}\ \xi = \tau \lambda_{\max}(M^{\delta}), \;
 \; \tau \in (0,1).
\end{equation*}
It is easy to see that $B_0 \succeq \tau M^{\delta}$.

Next we show that the $T_k$, $T_+^k$ and $T_-$ satisfy Condition \ref{cond-main} (d)-(g). We suppose that  $B_0 \succeq \tau M^{\delta}$ and $\tau \in (\frac3 4, 1)$.

First we show {\color{blue}Condition \ref{cond-main} (e)}. Note that ${s}_{k}^\top B_{k}{s}_{k} \geq \tau {s}_{k}^\top M^\delta {s}_{k} \geq \tau\delta \|s_k\|^2$, $\tilde{l}_k^\top s_k = s_k^\top M s_k + \delta \|s_k\|^2 \geq \delta \|s_k\|^2$, and $M$ is the constant matrix. Therefore, we can suppose that $\|B^{\rm BFGS}_{k+1} - B_k\|$ is bounded above by some constant $Q>0$, that is, $-QI \preceq B^{\rm BFGS}_{k+1} - B_k \preceq QI$. Moreover, $T_+^k = B_k - \tau M \succeq \tau M^{\delta} - \tau M \succeq \tau \delta I$. Then we can obtain that
\begin{align*}
T_+^{k+1} ={}& B_{k+1} - \tau M \nn \\
={} & B_k + c_k( B^{\rm BFGS}_{k+1} - B_k) - \tau M\nn \\
={} & T_+^k + c_k( B^{\rm BFGS}_{k+1} - B_k)  \nn \\
\preceq{} & T_+^k + \frac {c_k Q} {\tau \delta} \tau \delta I \nn \\
\preceq{} & T_+^k + \frac {c_k Q} {\tau \delta} T_+^k \nn \\
={} & (1 + \frac {c_k Q} {\tau \delta}) T_+^k.
\end{align*}
On the other hand, we have
\begin{align*}
T_+^{k} ={} & T_+^{k+1} - c_k( B^{\rm BFGS}_{k+1} - B_k)  \nn \\
\preceq{} & T_+^{k+1} + \frac {c_k Q} {\tau \delta} \tau \delta I \nn \\
\preceq{} & T_+^{k+1} + \frac {c_k Q} {\tau \delta} T_+^{k+1} \nn \\
={} & (1 + \frac {c_k Q} {\tau \delta}) T_+^{k+1}.
\end{align*}
Let $\gamma_k = \frac {Q}{\tau \delta} c_k$. Then we have \[\label{cond-e}\frac{1}{1+\gamma_k} T_+^k \preceq T_+^{k+1} \preceq (1+\gamma_k) T_+^k \;\;\mathrm{for}\;\mathrm{all}\;\; k.\]

Note that $\bar{M} = \nabla^2_{yy} g(y) = \Sigma_g$. Then $T_k + \Sigma_g + \beta B^\top B = B_k - M + \Sigma_g + \beta B^\top B = B_k \succ 0$ which shows that {\color{blue}Condition (d)} holds.

Next we show {\color{blue}Condition (f)}.
Since \eqref{cond-e} implies that 
$B_{k+1} - \tau M = T_+^{k+1} \preceq (1+\gamma_k)T_+^k = (1+\gamma_k)(B_k -\tau M)$ and $M$ is positive semidefinite, we have
$$B_{k+1} \preceq (1+\gamma_k)B_k - \gamma_k \tau M \preceq (1+\gamma_k)B_k.$$

Obviously,
$$ T_{k+1} + \Sigma_g + \beta B^\top B = B_{k+1} \preceq (1+\gamma_k)B_k = (1+\gamma_k)(T_k + \Sigma_g + \beta B^\top B).$$

Finally, we show {\color{blue}Condition (g)}. From the definition of $M$, we have
\begin{align*}
&{} T_k + \frac3 2 \Sigma_g - \frac{\gamma_{k-1}}{2} T_+^k - 2T_- + (\frac34 - \frac12 c)\beta B^\top B \nn \\
&{} = B_k - M + \frac3 2 \bar{M} - \frac{\gamma_{k-1}}{2} (B_k - \tau M) - 2(1-\tau)M +  (\frac34 - \frac12 c)\beta B^\top B \nn \\
&{} = \left(1-\frac{\gamma_{k-1}}{2}\right) B_k + \frac3 2 \bar{M} -M + \frac{\gamma_{k-1}}{2} \tau M - 2(1-\tau)M +  (\frac34 - \frac12 c)\beta B^\top B \nn \\
&{} \succeq \left(1-\frac{\gamma_{k-1}}{2}\right) \tau M + \frac3 2 \bar{M} -M + \frac{\gamma_{k-1}}{2} \tau M - 2(1-\tau)M +  (\frac34 - \frac12 c)\beta B^\top B \nn \\
&{} =  (3\tau - 3)(\bar{M} + \beta B^\top B) + \frac3 2 \bar{M} + (\frac34 - \frac12 c)\beta B^\top B \nn \\
&{} = (3\tau - \frac 3 2)\bar{M} + (3\tau - \frac94 - \frac12 c)\beta B^\top B,
\end{align*}
where the matrix inequality follows from $B_k \succeq \tau M^{\delta} = \tau M + \tau \delta I \succeq \tau M$. Note that there exist $\bar{k}$ such that $\gamma_k \leq 1$ for all $k \geq \bar{k}$. Without loss of generality, we assume $\bar{k} = 0$ and thus $\left(1- \frac{\gamma_{k-1}}{2}\right) \geq 0$ for all $k$.

Let $c = 2(\tau-\frac34)$. It is easy to see that $c \in (0, \frac12)$. Moreover, $3\tau - \frac32 > 0$ and $3\tau - \frac94 - \frac12 c = 2\tau - \frac3 2 >0$.

As a conclusion of the above discussion, the indefinite proximal term $T_k$ generated via the BFGS update can satisfy Condition \ref{cond-main}. Obviously, the VMIP-ADMM can cover the general indefinite proximal ADMM as the following remark.
\begin{remark}
When $\{T_k\}$  be a constant sequence for all $k$, that is, $T_k = T$, then we can write $T = T_+ - T_-$, where $T_+, T_- \succeq 0$. It is easy to check that the boundness Condition (e) and (f) immediately hold when $\gamma_k \equiv 0$. Let $T_+ = \tau(rI - \beta B^\top B) \succ 0$ and $T_- = (1 - \tau) \beta B^\top B \succeq 0$, we choose
\begin{equation*}
T = T_{+} - T_{-} =  \tau r I - \beta B^{\Tsf} B, ~~\mathrm{with}~~ r > \beta\|B^T B\|.
\end{equation*}
Condition (d) holds. For $\tau \in (0.75, 1)$, taking $c = 2(\tau-\frac34)$, then Condition (g) turns to be
\begin{align*}
T + \frac3 2 \Sigma_g - 2T_- + (\frac34 - \frac12 c)\beta B^\top B &{} \succ \tau \beta B^\top B - \beta B^\top B - 2(1-\tau)\beta B^\top B + (\frac34 - \frac12 c)\beta B^\top B \nn \\
&{} = (3\tau - \frac94 - \frac12 c)\beta B^\top B \nn \\
&{} \succ 0.
\end{align*}
It is reduced to the indefinite proximal ADMM in \cite{he2017optimal}.
\end{remark}

\section{Conclusions\label{sec:concl}}
In this paper, we proposed a variable metric indefinite proximal ADMM whose indefinite proximal term can be chosen differently at every iterative step. We proved the global convergence of the proposed method under some requirements by applying an analysis technique in \cite{Gu2019indefinite}. Moreover, for a special problem whose $y$-subproblems are unconstrained quadratic programming problem, we proposed to construct the indefinite term $T_k$ via the BFGS update. We showed that such construction can satisfy the general convergent conditions. 

Note that a strictly contractive version of the original ADMM which is known as the Peaceman-Rachford splitting method (PRSM) sometimes performs better in numerical experiments with some penalty parameters \cite{he2014strictly}. An indefinite proximal version of the PRSM also has been studied by many researchers \cite{gao2018symmetric, jiang2018generalized}. A further extension is to consider the variable metric indefinite term for PRSM. We leave this topic as one of our future work.

On the other hand, how to choose an adjusted proximal term is important to design a more efficient algorithm. The BFGS update provides better performance for some special problems whose $y$-subproblem is quadratic problem. It is worth developing some efficient proximal term for a general nonlinear subproblem.

\bibliographystyle{siam}
\bibliography{ILBFGS}

\begin{thebibliography}{10}

\bibitem{attouch2010parallel}
{\sc H.~Attouch, L.~M. Briceno-Arias, and P.~L. Combettes}, {\em A parallel
  splitting method for coupled monotone inclusions}, SIAM Journal on Control
  and Optimization, 48 (2010), pp.~3246--3270.

\bibitem{banert2016fixing}
{\sc S.~Banert, R.~I. Bot, and E.~R. Csetnek}, {\em Fixing and extending some
  recent results on the {ADMM} algorithm}, arXiv preprint arXiv:1612.05057,
  (2016).

\bibitem{bauschke2011convex}
{\sc H.~H. Bauschke, P.~L. Combettes, et~al.}, {\em Convex analysis and
  monotone operator theory in Hilbert spaces}, vol.~408, Springer, 2011.

\bibitem{boyd2011distributed}
{\sc S.~Boyd, N.~Parikh, E.~Chu, B.~Peleato, and J.~Eckstein}, {\em Distributed
  optimization and statistical learning via the alternating direction method of
  multipliers}, Foundations and Trends{\textregistered} in Machine Learning, 3
  (2011), pp.~1--122.

\bibitem{chambolle2011first}
{\sc A.~Chambolle and T.~Pock}, {\em A first-order primal-dual algorithm for
  convex problems with applications to imaging}, Journal of mathematical
  imaging and vision, 40 (2011), pp.~120--145.

\bibitem{chen1994proximal}
{\sc G.~Chen and M.~Teboulle}, {\em A proximal-based decomposition method for
  convex minimization problems}, Mathematical Programming, 64 (1994),
  pp.~81--101.

\bibitem{chen2013primal}
{\sc P.~Chen, J.~Huang, and X.~Zhang}, {\em A primal--dual fixed point
  algorithm for convex separable minimization with applications to image
  restoration}, Inverse Problems, 29 (2013), p.~025011.

\bibitem{combettes2009iterative}
{\sc P.~L. Combettes}, {\em Iterative construction of the resolvent of a sum of
  maximal monotone operators}, J. Convex Anal, 16 (2009), pp.~727--748.

\bibitem{deng2012global}
{\sc W.~Deng and W.~Yin}, {\em On the global and linear convergence of the
  generalized alternating direction method of multipliers}, tech. rep., DTIC
  Document, 2012.

\bibitem{Deng2016}
\leavevmode\vrule height 2pt depth -1.6pt width 23pt, {\em On the global and
  linear convergence of the generalized alternating direction method of
  multipliers}, Journal of Scientific Computing, 66 (2016), pp.~889--916.

\bibitem{douglas1956numerical}
{\sc J.~Douglas and H.~Rachford}, {\em On the numerical solution of heat
  conduction problems in two and three space variables}, Transactions of the
  American mathematical Society,  (1956), pp.~421--439.

\bibitem{eckstein1994somesaddle}
{\sc J.~Eckstein}, {\em Some saddle-function splitting methods for convex
  programming}, Optimization Methods and Software, 4 (1994), pp.~75--83.

\bibitem{eckstein1992douglas}
{\sc J.~Eckstein and D.~P. Bertsekas}, {\em On the douglas-rachford splitting
  method and the proximal point algorithm for maximal monotone operators},
  Mathematical Programming, 55 (1992), pp.~293--318.

\bibitem{eckstein2015understanding}
{\sc J.~Eckstein and W.~Yao}, {\em Understanding the convergence of the
  alternating direction method of multipliers: {T}heoretical and computational
  perspectives}, Pac. J. Optim., 11 (2015), pp.~619--644.

\bibitem{eckstein2017approximate}
\leavevmode\vrule height 2pt depth -1.6pt width 23pt, {\em Approximate {ADMM}
  algorithms derived from {L}agrangian splitting}, Computational Optimization
  and Applications, 68 (2017), pp.~363--405.

\bibitem{eckstein2018relative}
\leavevmode\vrule height 2pt depth -1.6pt width 23pt, {\em Relative-error
  approximate versions of {D}ouglas--{R}achford splitting and special cases of
  the {ADMM}}, Mathematical Programming, 170 (2018), pp.~417--444.

\bibitem{esser2009general}
{\sc E.~Esser, X.~Zhang, and T.~Chan}, {\em A general framework for a class of
  first order primal-dual algorithms for tv minimization}, Ucla Cam Report,
  (2009), pp.~09--67.

\bibitem{fazel2013hankel}
{\sc M.~Fazel, T.~K. Pong, D.~Sun, and P.~Tseng}, {\em Hankel matrix rank
  minimization with applications to system identification and realization},
  SIAM Journal on Matrix Analysis and Applications, 34 (2013), pp.~946--977.

\bibitem{fortin1983chapter}
{\sc M.~Fortin and R.~Glowinski}, {\em Chapter iii on
  decomposition-coordination methods using an augmented lagrangian}, in Studies
  in Mathematics and Its Applications, vol.~15, Elsevier, 1983, pp.~97--146.

\bibitem{gabay1976dual}
{\sc D.~Gabay and B.~Mercier}, {\em A dual algorithm for the solution of
  nonlinear variational problems via finite element approximation}, Computers
  \& Mathematics with Applications, 2 (1976), pp.~17--40.

\bibitem{gao2018symmetric}
{\sc B.~Gao and F.~Ma}, {\em Symmetric alternating direction method with
  indefinite proximal regularization for linearly constrained convex
  optimization}, Journal of Optimization Theory and Applications, 176 (2018),
  pp.~178--204.

\bibitem{glowinski1975approximation}
{\sc R.~Glowinski and A.~Marroco}, {\em Sur l'approximation, par
  {\'e}l{\'e}ments finis d'ordre un, et la r{\'e}solution, par
  p{\'e}nalisation-dualit{\'e} d'une classe de probl{\`e}mes de dirichlet non
  lin{\'e}aires}, ESAIM: Mathematical Modelling and Numerical
  Analysis-Mod{\'e}lisation Math{\'e}matique et Analyse Num{\'e}rique, 9
  (1975), pp.~41--76.

\bibitem{Goncalves2018}
{\sc M.~L.~N. Gon{\c{c}}alves, M.~M. Alves, and J.~G. Melo}, {\em Pointwise and
  ergodic convergence rates of a variable metric proximal alternating direction
  method of multipliers}, Journal of Optimization Theory and Applications, 177
  (2018), pp.~448--478.

\bibitem{Gu2019indefinite}
{\sc Y.~Gu, B.~Jiang, and D.~Han}, {\em An indefinite-proximal-based strictly
  contractive {P}eaceman-{R}achford splitting method}, tech. rep., 2019.

\bibitem{Gu2019bfgs}
{\sc Y.~{Gu} and N.~{Yamashita}}, {\em An alternating direction method of
  multipliers with the {BFGS} update for structured convex quadratic
  optimization}, arXiv e-prints arXiv:1903.02270,  (2019).

\bibitem{Gu2019optimal}
{\sc Y.~Gu and N.~Yamashita}, {\em A proximal {ADMM} with the {B}royden family
  for convex optimization problems}, Avaliable on http://www.
  optimization-online. org,  (2019).

\bibitem{HLHY2002}
{\sc B.~He, L.-Z. Liao, D.~Han, and H.~Yang}, {\em A new inexact alternating
  directions method for monotone variational inequalities}, Mathematical
  Programming, 92 (2002), pp.~103--118.

\bibitem{he2014strictly}
{\sc B.~He, H.~Liu, Z.~Wang, and X.~Yuan}, {\em A strictly contractive
  {P}eaceman--{R}achford splitting method for convex programming}, SIAM Journal
  on Optimization, 24 (2014), pp.~1011--1040.

\bibitem{he2017optimal}
{\sc B.~He, F.~Ma, and X.~Yuan}, {\em Optimal linearized alternating direction
  method of multipliers for convex programming}, Avaliable on http://www.
  optimization-online. org,  (2017).

\bibitem{he2012convergence}
{\sc B.~He and X.~Yuan}, {\em On the ${O}(1/n)$ convergence rate of the
  {D}ouglas-{R}achford alternating direction method}, SIAM Journal on Numerical
  Analysis, 50 (2012), pp.~700--709.

\bibitem{he2015block}
\leavevmode\vrule height 2pt depth -1.6pt width 23pt, {\em Block-wise
  alternating direction method of multipliers for multiple-block convex
  programming and beyond}, SMAI Journal of Computational Mathematics, 1 (2015),
  pp.~145--174.

\bibitem{jiang2018generalized}
{\sc F.~Jiang, Z.~Wu, and X.~Cai}, {\em Generalized admm with optimal
  indefinite proximal term for linearly constrained convex optimization},
  Journal of Industrial \& Management Optimization,  (2018), pp.~183--202.

\bibitem{koh2007interior}
{\sc K.~Koh, S.-J. Kim, and S.~Boyd}, {\em An interior-point method for
  large-scale l1-regularized logistic regression}, Journal of Machine learning
  research, 8 (2007), pp.~1519--1555.

\bibitem{li2016majorized}
{\sc M.~Li, D.~Sun, and K.-C. Toh}, {\em A majorized admm with indefinite
  proximal terms for linearly constrained convex composite optimization}, SIAM
  Journal on Optimization, 26 (2016), pp.~922--950.

\bibitem{lions1979splitting}
{\sc P.-L. Lions and B.~Mercier}, {\em Splitting algorithms for the sum of two
  nonlinear operators}, SIAM Journal on Numerical Analysis, 16 (1979),
  pp.~964--979.

\bibitem{lotito2009class}
{\sc P.~A. Lotito, L.~A. Parente, and M.~Solodov}, {\em A class of variable
  metric decomposition methods for monotone variational inclusions}, J. Convex
  Anal, 16 (2009), pp.~857--880.

\bibitem{nesterov2013gradient}
{\sc Y.~Nesterov}, {\em Gradient methods for minimizing composite functions},
  Mathematical Programming, 140 (2013), pp.~125--161.

\bibitem{rudin1992nonlinear}
{\sc L.~I. Rudin, S.~Osher, and E.~Fatemi}, {\em Nonlinear total variation
  based noise removal algorithms}, Physica D: nonlinear phenomena, 60 (1992),
  pp.~259--268.

\bibitem{trevor2009elements}
{\sc H.~Trevor, T.~Robert, and F.~JH}, {\em The elements of statistical
  learning: data mining, inference, and prediction}, 2009.

\bibitem{tseng2010approximation}
{\sc P.~Tseng}, {\em Approximation accuracy, gradient methods, and error bound
  for structured convex optimization}, Mathematical Programming, 125 (2010),
  pp.~263--295.

\bibitem{tseng2009coordinate}
{\sc P.~Tseng and S.~Yun}, {\em A coordinate gradient descent method for
  nonsmooth separable minimization}, Mathematical Programming, 117 (2009),
  pp.~387--423.

\bibitem{xu2011class}
{\sc M.~Xu and T.~Wu}, {\em A class of linearized proximal alternating
  direction methods}, Journal of Optimization Theory and Applications, 151
  (2011), pp.~321--337.

\bibitem{yin2008bregman}
{\sc W.~Yin, S.~Osher, D.~Goldfarb, and J.~Darbon}, {\em Bregman iterative
  algorithms for $l_1$-minimization with applications to compressed sensing},
  SIAM Journal on Imaging sciences, 1 (2008), pp.~143--168.

\bibitem{yuan2005improvement}
{\sc X.~Yuan}, {\em The improvement with relative errors of he et al.'s inexact
  alternating direction method for monotone variational inequalities},
  Mathematical and computer modelling, 42 (2005), pp.~1225--1236.

\end{thebibliography}
\end{document}